\numberwithin{equation}{section}
\newcommand\frg{\mathfrak{g}}
\newcommand\Res{\operatorname{Res}}
\newcommand\mon{\overline{\operatorname{M}}_{g,n}}
\newtheorem{theorem}{Theorem}[section]
\newtheorem{remark}[theorem]{ Remark}
\newtheorem{proposition}[theorem]{Proposition}
\newtheorem{definition}[theorem]{Definition}
\newtheorem{example}[theorem]{\bf Example}
\begin{document}
\title{Strange duality of Verlinde spaces for $G_2$ and $F_4$ }
\subjclass[2010]{Primary 17B67, 14H60, Secondary 32G34, 81T40}
\author{Swarnava Mukhopadhyay}
\address{Department of Mathematics\\ University of Maryland\\ CB \#4015, Mathematics Building
\\ College Park, MD 20742}
%\email{belkale@email.unc.edu}
\email{swarnava@umd.edu}
\thanks{The first author was supported in part by NSF Grant \#DMS-0901249.}
\begin{abstract} We prove that the pull back of the canonical theta divisor for $E_8$-bundles at level one induces a strange duality between Verlinde spaces for $G_2$ and $F_4$ at level one on smooth curves of genus $g$. We also prove a parabolic generalization in terms of conformal blocks and write down identities between conformal blocks divisors in $\operatorname{Pic}(\mon)_{\mathbb{Q}}$. 
%As an application, we study the relations in  given by conformal blocks divisor with special attention to the genus zero case. 
\end{abstract}
\maketitle 
\section{Introduction}
Let $G$ be a simple, simply-connected, complex algebraic group and $X$ be a smooth algebraic curve over $\mathbb{C}$ of genus $g$. Let $\mathcal{M}_{G}(X)$ denote the moduli stack of principal $G$ bundles on $X$. The works of \cite{KNR, LS} tell us that $\operatorname{Pic}(\mathcal{M}_G(X))=\mathbb{Z}\mathcal{L},$ where $\mathcal{L}$ is a ample generator of the Picard group. When $G=\operatorname{SL}_r$, the line bundle $\mathcal{L}$ is just the determinant of cohomology (see \cite{DN} for a definition). For other simple, simply-connected groups, the line bundle $\mathcal{L}$ is a root of the determinant of cohomology \cite{ KNR, LS, S}. The spaces of global sections of positive powers of this line bundle $H^0(\mathcal{M}_{G}(X),\mathcal{L}^{\otimes \ell})$
are known as Verlinde spaces and the positive integer $\ell$ is known as the level. The dimension of these spaces are given by  the famous Verlinde formula \cite{B, Fal, TUY}. By a result of Faltings \cite{Fal}, it follows that for $G=E_8$ and for any genus $g$, the dimension $\dim H^0(\mathcal{M}_{E_8}(X),\mathcal{L})=1$. This implies that the moduli stack $\mathcal{M}_{E_8}(X)$ carries a natural divisor $\Delta$. 

In this paper, we consider the subgroup $P=G_2\times  {F}_4$  of $E_8$. We study the pull back of the canonical $E_8$-divisor $\Delta$ under the map $\phi: \mathcal{M}_{G_2}(X)\times \mathcal{M}_{F_4}(X)\rightarrow \mathcal{M}_{E_8}(X)$. It is easy to check that $\phi^*{\mathcal{L}}\simeq \mathcal{L}_1\boxtimes \mathcal{L}_2$, where $\mathcal{L}_1$( respectively $\mathcal{L}_2$ ) is the ample generator of the Picard group of $\mathcal{M}_{G_2}(X)$ ( respectively $\mathcal{M}_{F_4}(X)$). The pull back $\phi^*\Delta$ gives a map well defined up to constants between the following spaces: 
\begin{equation}\label{SD}
\phi^*: H^0({\mathcal{M}_{G_2}(X),\mathcal{L}_1})^{\vee}\rightarrow H^0(\mathcal{M}_{F_4}(X),\mathcal{L}_2)
\end{equation}
Using the Verlinde formula, it is easy to see that the dimensions of both the spaces are same and is equal to the following:
$$\dim H^0(\mathcal{M}_{G_2},\mathcal{L}_1)=\dim H^0(\mathcal{M}_{F_4},\mathcal{L}_2)=\bigg( \frac{5+\sqrt{5}}{2}\bigg)^{g-1}+\bigg(\frac{5-\sqrt{5}}{2}\bigg)^{g-1}.$$ With this notation our first theorem is the following:
\begin{theorem}\label{main1}
The map $\phi^*$ in \eqref{SD} induces a strange duality isomorphism between the Verlinde spaces $H^0(\mathcal{M}_{G_2}(X),\mathcal{L}_1)$ and $H^0(\mathcal{M}_{F_4}(X),\mathcal{L}_2)$ for any smooth curve $X$ of genus $g$.
\end{theorem}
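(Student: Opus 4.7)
The plan is to translate Theorem \ref{main1} into a statement about conformal blocks, exploit the conformal embedding $\mathfrak{g}_{2}\oplus\mathfrak{f}_{4}\subset\mathfrak{e}_{8}$ at level $(1,1)$, and establish non-degeneracy by factorization and reduction to a genus-$0$ base case.

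By the theorems cited in the introduction, $H^{0}(\mathcal{M}_{G}(X),\mathcal{L}^{\otimes k})$ is canonically isomorphic to the space of conformal blocks $V^{\dagger}_{\mathfrak{g},k}(X;\vec{0})$ for the affine Lie algebra $\hat{\mathfrak{g}}$ at level $k$. Under this identification, the section $\Delta\in H^{0}(\mathcal{M}_{E_{8}}(X),\mathcal{L})$ corresponds, up to scale, to the unique nonzero vector $\psi\in V^{\dagger}_{\mathfrak{e}_{8},1}(X;\vec{0})$. The embedding $\mathfrak{g}_{2}\oplus\mathfrak{f}_{4}\subset\mathfrak{e}_{8}$ is conformal: the Sugawara central charges add up, $c(\mathfrak{g}_{2})_{1}+c(\mathfrak{f}_{4})_{1}=\tfrac{14}{5}+\tfrac{26}{5}=8=c(\mathfrak{e}_{8})_{1}$, and both Dynkin indices equal $1$ (which is exactly what gives $\phi^{*}\mathcal{L}\simeq\mathcal{L}_{1}\boxtimes\mathcal{L}_{2}$). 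The level-$1$ vacuum therefore branches as
$$L(\mathfrak{e}_{8},0)\;\simeq\;L(\mathfrak{g}_{2},0)\otimes L(\mathfrak{f}_{4},0)\;\oplus\;L(\mathfrak{g}_{2},\omega_{1})\otimes L(\mathfrak{f}_{4},\omega_{4}),$$
and, combined with propagation of vacua, this identifies $\phi^{*}\Delta\in H^{0}(\mathcal{M}_{G_{2}}(X),\mathcal{L}_{1})\otimes H^{0}(\mathcal{M}_{F_{4}}(X),\mathcal{L}_{2})$ with the vacuum-vacuum component of the image of $\psi$ under the natural restriction of conformal blocks. Since the two Verlinde dimensions coincide, Theorem~\ref{main1} reduces to showing that this tensor has maximal rank.

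Following the approach of Belkale and Marian-Oprea in the $\operatorname{SL}_{r}$ setting, I would first formulate and establish a parabolic analogue on $n$-pointed smooth curves with arbitrary level-$1$ weight insertions (this is the parabolic generalization advertised in the abstract), and then induct on $(g,n)$ via the factorization theorem of Tsuchiya-Ueno-Yamada. Degenerating $X$ along a non-separating cycle writes each Verlinde space on both sides as a direct sum over intermediate highest weights, and the compatibility of the branching at the two new marked points with the strange duality maps intertwines these sums. Iterating the degeneration reduces the problem to a finite collection of statements on $\mathbb{P}^{1}$ with three marked points.

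The main obstacle is precisely this base case: a non-degeneracy statement for the induced conformal-block pairing on $(\mathbb{P}^{1};p_{1},p_{2},p_{3})$ with weight data drawn from the $(\mathfrak{g}_{2},\mathfrak{f}_{4})\subset\mathfrak{e}_{8}$ branching. Verifying it requires an explicit representation-theoretic computation using the fusion rules of $G_{2}$, $F_{4}$ and $E_{8}$ at level $1$, showing that a specific $E_{8}$-invariant vector projects nontrivially to the vacuum-vacuum summand of its branching. The simplifications that $\mathfrak{e}_{8}$ has only the vacuum as a level-$1$ integrable module and that $\mathfrak{g}_{2}$ and $\mathfrak{f}_{4}$ each have only two keep this verification finite, but executing it cleanly is where the real work will lie.
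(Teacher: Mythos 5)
Your outline reproduces the paper's strategy faithfully --- identification of Verlinde spaces with conformal blocks, the conformal embedding $\mathfrak{g}_2\oplus\mathfrak{f}_4\subset\mathfrak{e}_8$ with central charges $\tfrac{14}{5}+\tfrac{26}{5}=8$, the two-term branching of the level-one vacuum, flatness of the induced map under the KZ/Hitchin connection, and reduction by factorization to $\mathbb{P}^1$ with three marked points. But the proposal stops exactly where the paper's actual content begins: you defer the three-point non-degeneracy verification as ``where the real work will lie'' without carrying it out, and without identifying the specific obstruction that makes this case harder than the exceptional cases treated by Boysal--Pauly. Their argument uses the nontrivial center of the conformal subgroup, and $G_2\times F_4$ has trivial center, so one cannot simply ``verify using the fusion rules''; a new mechanism is required.

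Concretely, the missing content is the following. Writing $\langle\Psi|$ for the generator of $\mathcal{V}^{\dagger}_{(\omega_0,\omega_0,\omega_0)}(\mathbb{P}^1,\mathfrak{e}_8,1)$, one must exhibit explicit vectors in the summand $\mathcal{H}_{\omega_1}(\mathfrak{g}_2,1)\otimes\mathcal{H}_{\omega_4}(\mathfrak{f}_4,1)\subset\mathcal{H}_{\omega_0}(\mathfrak{e}_8,1)$ pairing nontrivially under $\langle\Psi|$. The highest weight vector of that summand is $X_{\alpha}(-1)|{\bf 0}\rangle$ for a distinguished root $\alpha$ of $\mathfrak{e}_8$ restricting to $(\omega_1,\omega_4)$, and the case of one or two nontrivial insertions follows from gauge symmetry and $[X_{-\alpha}(1),X_{\alpha}(-1)]=\alpha^{\vee}+c\langle X_{-\alpha},X_{\alpha}\rangle$. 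The genuinely hard case is three insertions of $(\omega_1,\omega_4)$: the analogous computation with $|{\bf 0}\rangle$ at the first point vanishes for weight reasons, and the paper's key idea (Proposition \ref{weird}) is to exploit the fact that the embedding is \emph{not of full rank}: one chooses a root $\beta$ of $\mathfrak{e}_8$ whose coroot does not lie in $\mathfrak{h}_1\oplus\mathfrak{h}_2$, takes $H\in\mathfrak{h}$ orthogonal to $\mathfrak{h}_1\oplus\mathfrak{h}_2$ with $\beta(H)\neq 0$, and inserts $H(-1)|{\bf 0}\rangle$ at the first point; the term $[X_{-\beta},H](-1)|{\bf 0}\rangle = -\beta(H)X_{-\beta}(-1)|{\bf 0}\rangle$ then survives and reduces the computation to the two-insertion case. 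Without this (or an equivalent device), the reduction scheme you describe does not close, so the proposal as written has a genuine gap at its base case.
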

 “Strange duality” or “rank-level” duality for Verlinde spaces has been an interesting topic in the last few years. We refer the reader to the surveys \cite{ MOP1, Pau1, Po} for details. Strange duality for vector bundles was conjectured in the works A. Beauville, Donagi-Tu and was proved by P. Belkale \cite{Bel1, Bel2} and by Marian-Oprea \cite{MO}. For symplectic bundles, strange duality was proved by T. Abe \cite{A} (see also paper by P. Belkale \cite{Bel4}). For any maximal, semi-simple, simply connected subgroup $P$ of $E_8$ of maximal rank strange duality was proved by \cite{BP}. The subgroup $P=G_2\times F_4$ of $E_8$ that we consider in this paper is the only maximal subgroup of $E_8$ of non maximal rank such that the map of corresponding Lie algebras
 $\phi : \mathfrak{g}_2\oplus \mathfrak{f}_4\rightarrow \mathfrak{e}_8$ is a conformal embedding ( see Section \ref{conformal}). 
\begin{remark}
It is important to point out that the subgroup $P=G_2\times F_4$ of $E_8$ has no center. All the other conformal subgroups of $P$ that were studied in \cite{BP} has non trivial center. In \cite{BP}, the argument in the proof of strange duality makes critical use of the fact that the subgroup $P$ has a non trivial center. As pointed out in Section 7.2.2 of \cite{BP}, their argument breaks down when $P=G_2\times F_4$. 
\end{remark}

We prove a parabolic generalization of the above statement using the language of conformal blocks and rank-level duality. We now briefly recall the notion of conformal blocks and rank-level duality and state a more general theorem from which Theorem \ref{main1} follows. For details, we refer the reader to Section \ref{basic}. 

Let $\frg$ be a simple complex Lie algebra and $\ell$ be a positive integer. We denote by $P_{\ell}(\frg)$, the set of dominant integral weight $\lambda$ of $\frg$ such that $(\lambda ,\theta)\leq \ell$, where $\theta$ is the longest root of $\frg$ and $(,)$ is the normalized Cartan Killing form such that $(\theta, \theta)=2$. For each $\lambda \in P_{\ell}(\frg)$, there exists an highest weight integrable irreducible module $\mathcal{H}_{\lambda}(\frg,\ell)$ of the affine Lie algebra $\widehat{\frg}$ ( see Section \ref{basic} for a definition). 

For an $n$-tuple $\vec{\lambda}=(\lambda_1,\dots, \lambda_n)$ of level $\ell$ weights of $\frg$, we denote the tensor product $\mathcal{H}_{\lambda_1}(\frg,\ell)\otimes \dots \otimes \mathcal{H}_{\lambda_n}(\frg,\ell)$ by $\mathcal{H}_{\vec{\lambda}}(\frg,\ell)$. Now let $X$ be a stable curve of arithmetic genus $g$ and $p_1,\dots, p_n$ be $n$ distinct smooth marked points of $X$ with chosen formal coordinates $\mathfrak{X}=(\xi_1,\dots, \xi_n)$ around the marked points $(p_1,\dots, p_n)$. The algebra $\frg(\mathfrak{X}):=\frg\otimes H^0(X, \mathcal{O}_X(\vec{p}))$ acts on $\mathcal{H}_{\vec{\lambda}}(\frg,\ell)$ ( see Section \ref{basic}) by expansion of functions using formal coordinates. The space of Covacua or dual conformal blocks  $\mathcal{V}_{\vec{\lambda}}(\mathfrak{X},\frg,\ell)$ are defined as follows:
$$\mathcal{V}_{\vec{\lambda}}(\mathfrak{X},\frg,\ell)=\mathcal{H}_{\vec{\lambda}}(\frg,\ell)/ \frg(\mathfrak{X}) \mathcal{H}_{\vec{\lambda}}(\frg,\ell).$$ Its dual $\mathcal{V}_{\vec{\lambda}}^{\dagger}(\mathfrak{X},\frg,\ell)$ is known as conformal blocks. As $C$ varies over the moduli stack $\mon$, the moduli stack of genus $g$ curves with $n$ marked points, the space of dual conformal blocks form a vector bundle $\mathbb{V}_{\vec{\lambda}}(\frg,\ell)$ over $\mon$.  When the genus of the curve $g=0$, the top exterior powers of dual conformal block vector bundles give nef divisors known as conformal blocks divisors on $\overline{\operatorname{M}}_{0,n}$. Formulas for the first Chern class and the Chern character formulas of $\mathbb{V}_{\vec{\lambda}}(\frg,\ell)$ are known due to the works of \cite{Fakh, MOP, MOPPZ, Muk3}. When $n=0$ and $X$ is a smooth curve of genus $g$, the space of conformal blocks $\mathcal{V}^{\dagger}_{\vec{\lambda}}(\mathfrak{X},\frg,\ell)$ is canonically identified with the Verlinde space $H^0(\mathcal{M}_G(X),\mathcal{L}^{\otimes \ell})$. We refer the reader to the works of \cite{BL, Fal, KNR, LS}. A parabolic version of the above theorem is also known. 

Let $\mathfrak{p}=\mathfrak{p}_1\oplus \mathfrak{p}_2$ be a conformal subalgebra of $\frg$ (see Section \ref{conformal} for a definition) and denote by $(\ell_1,\ell_2)$  the Dynkin multi-index of the embedding $\phi: \mathfrak{p} \rightarrow \mathfrak{g}$. Let $\vec{\lambda}=(\lambda_1,\dots, \lambda_n)$ ( respectively $\vec{\mu}=(\mu_1,\dots,\mu_n)$) and $\vec{\Lambda}=(\Lambda_1,\dots,\Lambda_n)$ be $n$ tuples of level $\ell_1$ ( respectively $\ell_2$) and level one weights of $\mathfrak{p}_1$ (respectively $\mathfrak{p}_2$) and $\mathfrak{g}$ such that for each $1\leq i\leq n$, the $\widehat{\mathfrak{p}}_1\oplus \widehat{\mathfrak{p}}_2$-module $\mathcal{H}_{\lambda_i}(\frg_2,1)\otimes \mathcal{H}_{\mu_i}(\mathfrak{f}_4,1)$ appears with multiplicity one in the decomposition of the level one $\widehat{\frg}$-module $\mathcal{H}_{\Lambda_i}(\mathfrak{e}_8,1)$ as a $\widehat{\mathfrak{p}}_1\oplus \widehat{\mathfrak{p}}_2$-module. Functoriality of the embedding gives rise to the following map of conformal blocks:
\begin{equation}\label{ranklevel}
\mathcal{V}_{\vec{\lambda}}(\mathfrak{X}, \mathfrak{p}_1,\ell_1)\otimes \mathcal{V}_{\vec{\mu}}(\mathfrak{X}, \mathfrak{p}_2,\ell_2)\rightarrow \mathcal{V}_{\vec{\Lambda}}(\mathfrak{X}, \mathfrak{g},1).
\end{equation}
  We refer the reader to \cite{Muk1} for details. We now restrict to the conformal embedding $\frg_2\oplus \mathfrak{f}_4 \rightarrow \mathfrak{e}_8$. Let $\vec{\omega}_1$ (respectively $\vec{\omega}_4$) denote an $n$ tuple of $\omega_1$'s (respectively $\omega_4$'s).  With the above notation, we now state a generalization of Theorem \ref{main1}.
\begin{theorem}\label{main2}
Let $\vec{\Lambda}$ be an $n$-tuple of vacuum representations of $\mathfrak{e}_8$ at level one, then the map in \eqref{ranklevel} induces a rank level duality isomorphism between the spaces 
$$\mathcal{V}_{\vec{\omega}_1}(\mathfrak{X},\frg_2,1)\simeq \mathcal{V}^{\dagger}_{\vec{\omega}_4}(\mathfrak{X},\mathfrak{f}_4,1),$$
where $\mathfrak{X}$ as before denotes the choice of formal coordinates around the marked points on smooth curves $X$ of genus $g$. 
\end{theorem}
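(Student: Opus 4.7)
The plan is to interpret the map \eqref{ranklevel} in the setting of Theorem \ref{main2} as a pairing into a canonical one-dimensional target, match dimensions by the Verlinde formula, and then prove non-degeneracy by induction on $(g,n)$ via factorization.

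First I would verify that \eqref{ranklevel} is well-defined with the chosen weights. The adjoint decomposition $\mathfrak{e}_8 \simeq (\mathfrak{g}_2 \oplus \mathfrak{f}_4) \oplus (V_{\omega_1} \boxtimes V_{\omega_4})$, combined with the conformal embedding property, gives the branching
\[
\mathcal{H}_0(\mathfrak{e}_8,1) \;\simeq\; \mathcal{H}_0(\mathfrak{g}_2,1) \otimes \mathcal{H}_0(\mathfrak{f}_4,1) \;\oplus\; \mathcal{H}_{\omega_1}(\mathfrak{g}_2,1) \otimes \mathcal{H}_{\omega_4}(\mathfrak{f}_4,1),
\]
with each summand of multiplicity one; hence \eqref{ranklevel} is defined for $\Lambda_i$ vacuum, $\lambda_i = \omega_1$ and $\mu_i = \omega_4$. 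By propagation of vacua, $\mathcal{V}_{\vec\Lambda}(\mathfrak{X}, \mathfrak{e}_8, 1) \simeq H^0(\mathcal{M}_{E_8}(X), \mathcal{L})$, which is one-dimensional by Faltings' result, so \eqref{ranklevel} becomes a linear functional on $\mathcal{V}_{\vec\omega_1}(\mathfrak{X}, \mathfrak{g}_2, 1) \otimes \mathcal{V}_{\vec\omega_4}(\mathfrak{X}, \mathfrak{f}_4, 1)$; equivalently, a map $\phi^* : \mathcal{V}_{\vec\omega_1}(\mathfrak{X}, \mathfrak{g}_2, 1) \to \mathcal{V}^{\dagger}_{\vec\omega_4}(\mathfrak{X}, \mathfrak{f}_4, 1)$.

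Next I would show the two sides have the same dimension. At level one, $P_1(\mathfrak{g}_2) = \{0, \omega_1\}$ and $P_1(\mathfrak{f}_4) = \{0, \omega_4\}$, and the associated fusion rings are both isomorphic to the Fibonacci ring $\mathbb{Z}[x]/(x^2 - x - 1)$ under $\omega_1 \leftrightarrow \omega_4 \leftrightarrow x$. A direct Verlinde formula computation then yields $\dim \mathcal{V}_{\vec\omega_1}(\mathfrak{X}, \mathfrak{g}_2, 1) = \dim \mathcal{V}^{\dagger}_{\vec\omega_4}(\mathfrak{X}, \mathfrak{f}_4, 1)$, so it is enough to prove that $\phi^*$ is injective. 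For this I would induct on $(g,n)$ via factorization. Degenerating $X$ at either a separating or a non-separating node and applying the factorization theorem of \cite{TUY} decomposes each of the three relevant conformal blocks as a direct sum indexed by level-one weights; since the branching of $\mathcal{H}_0(\mathfrak{e}_8, 1)$ contains only the paired summands $(0,0)$ and $(\omega_1, \omega_4)$, only these pairs contribute to the duality map, and the compatibility of \eqref{ranklevel} with factorization proved in \cite{Muk1} identifies $\phi^*$ block-diagonally with the rank-level duality maps on the normalization with two extra paired marked points. The inductive hypothesis then yields non-degeneracy block-by-block.

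The main obstacle is the base case, which I expect to be $(g,n) = (0,3)$ or $(0,4)$, where no further reduction is available and one must exhibit directly a class in $\mathcal{V}_{\vec\omega_1}(\mathbb{P}^1, \mathfrak{g}_2, 1)$ whose image under $\phi^*$ is non-zero. I would realise the conformal blocks on $\mathbb{P}^1$ as coinvariants of tensor products of fundamental representations and verify non-vanishing by an invariant-theoretic computation using the explicit embedding of the $\mathfrak{g}_2 \oplus \mathfrak{f}_4$-invariant inside the vacuum of $\mathfrak{e}_8$. A secondary technical point is keeping track of the scalar factors in the factorization compatibility so that each inductive step produces a genuinely non-zero pairing, not merely one up to a possibly vanishing constant.
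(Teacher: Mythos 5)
Your proposal follows essentially the same route as the paper: use the multiplicity-one branching of $\mathcal{H}_{\omega_0}(\mathfrak{e}_8,1)$ and the one-dimensionality of the $\mathfrak{e}_8$ block, reduce by factorization (together with flatness under the KZ/Hitchin connection and the compatibility results of \cite{BP, Muk1}) to $\mathbb{P}^1$ with three marked points, and there prove non-vanishing of the pairing by an explicit computation. The one substantive piece you leave unexecuted is exactly that three-point computation --- in particular the case $(\omega_1,\omega_1,\omega_1)/(\omega_4,\omega_4,\omega_4)$, where the paper must produce a root $\beta$ of $\mathfrak{e}_8$ and a Cartan element $H$ orthogonal to $\mathfrak{h}_1\oplus\mathfrak{h}_2$ with $\beta(H)\neq 0$ (Proposition \ref{weird}) and evaluate $\langle\Psi|$ on $H(-1)|{\bf 0}\rangle\otimes X_{\beta}(-1)|{\bf 0}\rangle\otimes X_{-\beta}(-1)|{\bf 0}\rangle$ via gauge symmetry; this is the step that replaces the center-of-$P$ argument of \cite{BP} and is where the real work of the paper lies.
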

\begin{remark}
%If $n=0$, then Theorem \ref{main2} reduces to Theorem \ref{main1} under the identification of the conformal blocks with the Verlinde spaces. This follows from the fact 
For $n=0$ and arbitrary conformal embeddings, the rank-level duality map \eqref{ranklevel} that arises in representation theory and the strange duality map commutes under the identification of conformal blocks with Verlinde spaces. This was first observed in \cite{Bel2} and was used in \cite{A, BP, Muk1}. The above immediately implies that for $n=0$, Theorem \ref{main2} is nothing but Theorem \ref{main1}. For arbitrary $n$, it is not clear how to give a geometric description of rank-level duality maps purely in the language of moduli stack of parabolic $G$ bundles on a curve $X$. 
\end{remark}
In Section \ref{proof}, we discuss the main reduction in the proof of the Theorem \ref{main2} with the key details in Section \ref{proofofmain}. We now discuss applications of the Theorem \ref{main2} in the birational geometry of $\mon$. Let $F(g,n)=(\frac{5+\sqrt{5}}{2})^{g-1}(\frac{1+\sqrt{5}}{2})^n+(\frac{5-\sqrt{5}}{2})^{g-1}(\frac{1-\sqrt{5}}{2})^n$. We have the following theorem:

\begin{theorem}\label{picard}Let $\vec{\omega}_1$ be an $n$-tuple of $\omega_1$'s and  $\vec{\omega}_4$ be an $n$-tuple of $\omega_4$'s. Then the following relation holds in $\operatorname{Pic}(\mon)_{\mathbb{Q}}$.

\begin{eqnarray*}
\bigg(4\lambda + \sum_{i=1}^n\psi_i\bigg)&=&\frac{1}{F(g,n)}(c_1(\mathbb{V}_{\vec{\omega}_1}(\frg_2,1)))+c_1(\mathbb{V}_{\vec{\omega}_4}(\mathfrak{f}_4,1))+\frac{F(g-1,n+2)}{F(g,n)}\delta_{irr}\\
&&+\sum_{h,A}\frac{F(h,|A|+1)F(g-h, n-|A|+1)}{F(g,n)}\delta_{h,A},
\end{eqnarray*}
where $\lambda$ is the first Chern class of the Hodge bundle, $\psi_i$ is the $i$-th Psi class, $\delta_{irr}$ denotes the class of the divisor corresponding to the irreducible nodal curves, $A$ is a subset of $\{1,\dots, n\}$ and $\delta_{h,A}$ denotes the boundary divisor corresponding to reducible nodal curves with one component having genus $h$ and containing the markings of the set $A$. In the above identity, the repetition $\delta_{h,A}=\delta_{g-h,A^c}$ is not allowed.
\end{theorem}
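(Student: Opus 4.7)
The plan is to compute $c_1(\mathbb{V}_{\vec{\omega}_1}(\frg_2,1)) + c_1(\mathbb{V}_{\vec{\omega}_4}(\mathfrak{f}_4,1))$ by applying the general first Chern class formula for conformal blocks vector bundles on $\mon$ and then simplifying via the conformal embedding together with Theorem \ref{main2}. Following \cite{Fakh, MOP, MOPPZ, Muk3}, for any conformal blocks bundle $\mathbb{V}_{\vec{\lambda}}(\frg,\ell)$ of rank $r$, central charge $c(\frg,\ell)$, and conformal weights $\Delta_{\lambda_i}$, the formula reads
\begin{equation*}
c_1(\mathbb{V}_{\vec{\lambda}}(\frg,\ell)) = \frac{r\,c(\frg,\ell)}{2}\lambda + r\sum_{i=1}^{n}\Delta_{\lambda_i}\psi_i - \sum_{B}\sum_{\mu\in P_\ell(\frg)}r_{B,\mu}\,\Delta_\mu\,\delta_B,
\end{equation*}
where $r_{B,\mu}$ is the rank of the $\mu$-labeled factorization piece at the boundary divisor $\delta_B$.

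Applying this to $\mathbb{V}_{\vec{\omega}_1}(\frg_2,1)$ and $\mathbb{V}_{\vec{\omega}_4}(\mathfrak{f}_4,1)$---both of rank $F(g,n)$---the interior part collapses under the conformal embedding. For the $\lambda$-coefficient, $c(\frg_2,1) + c(\mathfrak{f}_4,1) = 14/5 + 26/5 = 8 = c(\mathfrak{e}_8,1)$, giving $4F(g,n)$. For the $\psi_i$-coefficient, $\Delta_{\omega_1}^{\frg_2} + \Delta_{\omega_4}^{\mathfrak{f}_4} = 2/5 + 3/5 = 1$, which equals the lowest new $L_0$-eigenvalue in the $\widehat{\frg}_2 \oplus \widehat{\mathfrak{f}}_4$-decomposition
\begin{equation*}
\mathcal{H}_0(\mathfrak{e}_8,1) \cong \mathcal{H}_0(\frg_2,1)\otimes\mathcal{H}_0(\mathfrak{f}_4,1) \,\oplus\, \mathcal{H}_{\omega_1}(\frg_2,1)\otimes\mathcal{H}_{\omega_4}(\mathfrak{f}_4,1),
\end{equation*}
giving $F(g,n)$ per $\psi_i$. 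Hence the interior contribution is $F(g,n)\bigl(4\lambda + \sum_{i=1}^n \psi_i\bigr)$.

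For the boundary terms, the $\mu = 0$ pieces contribute nothing since $\Delta_0 = 0$. At $\delta_{irr}$, the $\mu = \omega_1$ factorization piece $\mathbb{V}_{\vec{\omega}_1,\omega_1,\omega_1}(\frg_2,1)$ on $\overline{M}_{g-1,n+2}$ has rank $F(g-1,n+2)$, and by Theorem \ref{main2} applied at genus $g-1$ with $n+2$ marked points, the analogous $\mu' = \omega_4$ piece for $\mathfrak{f}_4$ has the same rank $F(g-1,n+2)$. The combined $\delta_{irr}$ coefficient is therefore $-F(g-1,n+2)(2/5 + 3/5) = -F(g-1,n+2)$, and the same argument at each $\delta_{h,A}$ yields $-F(h,|A|+1)F(g-h,n-|A|+1)$. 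Dividing by $F(g,n)$ and rearranging gives the identity of the theorem.

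The main difficulty is invoking the Chern class formula in the precise form above at positive genus (in particular, verifying that no additional Mumford-anomaly-type $\delta_B$-correction appears beyond the $\sum_\mu r_{B,\mu}\Delta_\mu\delta_B$ term); this rests on the combined results of \cite{Fakh, MOP, MOPPZ, Muk3}. The identities $c(\frg_2,1) + c(\mathfrak{f}_4,1) = 8$ and $\Delta_{\omega_1}^{\frg_2} + \Delta_{\omega_4}^{\mathfrak{f}_4} = 1$ come from the $\mathfrak{e}_8$-vacuum branching displayed above, and the rank equality of the non-vacuum factorization pieces is a direct consequence of Theorem \ref{main2}.
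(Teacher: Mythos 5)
Your computation is correct, but it follows a genuinely different route from the one written in the paper. The paper proves Theorem \ref{picard} by first establishing Proposition \ref{pic}: it uses the coordinate-free construction of \cite{Muk3} and the trace-anomaly shift $n^{\omega_0}_{\omega_1,\omega_4}=1$ to produce a morphism of vector bundles $\mathbb{V}_{\vec{\omega}_1}(\frg_2,1)\otimes\mathbb{V}_{\vec{\omega}_4}(\mathfrak{f}_4,1)\otimes\bigotimes_i\mathbb{L}_i^{-1}\to\mathbb{V}_{\vec{\omega}_0}(\mathfrak{e}_8,1)$ over $\mon$, takes determinants, invokes Theorem \ref{main2} to conclude the resulting map of line bundles is an isomorphism over the locus of smooth curves, and then computes its order of vanishing along $\delta_{irr}$ and $\delta_{h,A}$ via factorization (getting $F(g-1,n+2)$ and $F(h,|A|+1)F(g-h,n-|A|+1)$); the theorem is then the first Chern class of that line-bundle identity. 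Your approach --- summing the two $c_1$ formulas and exploiting $c(\frg_2,1)+c(\mathfrak{f}_4,1)=8$ and $\Delta_{\omega_1}+\Delta_{\omega_4}=1$ --- is precisely the alternative the author acknowledges at the end of the introduction (``it is possible to give a complete proof of Theorem \ref{picard} using the Chern class formulas\dots But we do not write this up in this paper''). The trade-off: your route leans entirely on the positive-genus $c_1$ formula of \cite{MOPPZ} (the point you rightly flag), and it does not actually need the strange duality isomorphism --- the rank equalities of the factorization pieces already follow from the Verlinde formula, so Theorem \ref{main2} is not essential input; the paper's route uses Theorem \ref{main2} in an essential way and yields the stronger integral line-bundle statement of Proposition \ref{pic}, realizing the relation as the divisor of an explicit geometric map. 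Note also that your derivation confirms the intended reading of the displayed identity, namely $\frac{1}{F(g,n)}\bigl(c_1(\mathbb{V}_{\vec{\omega}_1}(\frg_2,1))+c_1(\mathbb{V}_{\vec{\omega}_4}(\mathfrak{f}_4,1))\bigr)$; the parenthesis in the statement as printed is misplaced.
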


Finally in Section \ref{picardmon}, we give a proof of Theorem \ref{picard}. Using the coordinate free description of conformal blocks in \cite{Muk3}, we construct the rank-level duality map over $\mon$. Following the same methods in the geometric proof of the main theorem in \cite{Muk3}, we compute the order of vanishing of the rank-level duality map on the boundary of $\mon$. Once that is done, Theorem \ref{picard} follows easily.

We also point out that once the statement of Theorem \ref{picard} is formulated, it is possible to give a complete proof of Theorem \ref{picard} using the Chern class formulas of conformal block bundles \cite{Fakh, Muk3, MOP}. But we do not write this up in this paper. 

\subsection*{Acknowledgements} I thank Jeffrey Adams, Shrawan Kumar and Richard Wentworth for useful conversations during the preparation of this paper. This work was initiated by a question (see Theorem \ref{main1}) conveyed to the author by Christian Pauly. I thank him for his comments and suggestions. 

\section{Notations and basic definitions }\label{basic}In this section, we recall the basic definitions and notations that we will use in the rest of this paper. For the definitions and properties of conformal blocks, we refer the reader to \cite{TUY}. We refer the reader to \cite{Kac} for basic definitions in the theory of Kac-Moody Lie algebras. 

Let $\frg$ be a simple, complex Lie algebra and $\mathfrak{h}$ be a Cartan subalgebra. We can decompose the Lie algebra $\frg$ as follows:
$\frg=\mathfrak{h}\oplus \sum_{\alpha \in \Delta}\frg_{\alpha},$ where $\Delta=\Delta_+ \sqcup \Delta_{-}$ is a system of roots divided into positive and negative roots. We denote by $\theta$, the longest root in $\Delta$ and we also fix a Cartan Killing form $(,)$ on $\frg$ normalized such that $(\theta, \theta)=2$. 

\subsection{Affine Lie algebras and Representation Theory} Let $t$ be a formal variable, we define the affine Lie algebra $\widehat{\frg}$ to be the Lie algebra 
$\widehat{\frg}=\frg\otimes \mathbb{C}((t))\oplus \mathbb{C}c,$ where $c$ belongs to the center of the Lie algebra $\widehat{\frg}$ and the Lie bracket is given by the formula
$$[X\otimes f , Y\otimes g ] =[X,Y]\otimes fg + (X,Y) \Res_{t=0}gdf.c,$$ where $X$, $Y$, are elements of the Lie algebra $\frg$ and $f$ and $g$ are element of $\mathbb{C}((t))$. The finite dimensional Lie algebra $\frg$ embeds in the degree zero part as a Lie subalgebra of $\widehat{\frg}$. 

Let $\ell$ be a positive integer and let $P_{\ell}(\frg):=\{\lambda \in P_{+}(\frg) | (\lambda,\theta)\leq \ell\}$ denote the set of level $\ell$ dominant weights of $\frg$. For every $\lambda \in P_{\ell}(\frg)$ there exists an unique, irreducible, integrable, highest weight $\widehat{\frg}$-module $\mathcal{H}_{\lambda}(\frg,\ell)$. We now recall some important properties:
\begin{enumerate}
\item $\mathcal{H}_{\lambda}(\frg,\ell)$'s are infinite dimensional.
\item The finite dimensional $\frg$-module $V_{\lambda}$ is a subset of $\mathcal{H}_{\vec{\lambda}}(\frg,\ell)$. 
\item The spaces $\mathcal{H}_{\lambda}(\frg,\ell)$ are quotient of Verma module. If $v_{\lambda}$ is the unique highest weight of $V_{\lambda}$, then $v_{\lambda}$ is also the highest weight vector of $\mathcal{H}_{\lambda}(\frg,\ell)$.
\end{enumerate}
\subsection{Conformal Blocks} Let $X$ be a curve of arithmetic genus $g$ with $n$-marked points $\vec{p}=(P_1,\dots, P_n)$ with chosen formal coordinates $\mathfrak{X}=(\xi_1,\dots, \xi_n)$ satisfying the following properties:
\begin{enumerate}
\item The $X$ has at most nodal singularities.
\item The marked points $P_1,\dots, P_n$ are smooth. 
\item The curve $X-\{P_1,\dots, P_n\}$ is affine. (We do not need it strictly.)
\item A stability condition equivalent to the finiteness of the automorphism group. 
\end{enumerate}
For any positive integer $n$, we consider the new algebra $\widehat{\frg}_n:=\bigoplus_{i=1}^n\frg\otimes_{\mathbb{C}}\mathbb{C}((\xi_i))\oplus \mathbb{C}c$ with the obvious Lie bracket. The current algebra $\frg(\mathfrak{X})=\frg\otimes H^0(X, \mathcal{O}_X(\vec{p}))$ is a Lie subalgebra of $\widehat{\frg}_n$. This follows from the fact that the sum of the residues of a meromorphic function is zero. We are now ready to define conformal blocks. 

Let $\vec{\lambda}=(\lambda_1,\dots, \lambda_n)$ be an $n$-tuple of level $\ell$ weights of $\frg$. Let $\mathcal{H}_{\vec{\lambda}}:=\mathcal{H}_{\lambda_1}(\frg,\ell)\otimes  \dots \otimes \mathcal{H}_{\lambda_n}(\frg,\ell)$. We define the space of conformal blocks as follows:
$$\mathcal{V}^{\dagger}_{\vec{\lambda}}(\mathfrak{X},\frg,\ell):=\operatorname{Hom}(\mathcal{H}_{\vec{\lambda}}/\frg(\mathfrak{X})\mathcal{H}_{\vec{\lambda}},\mathbb{C})$$ The dual space $\mathcal{V}_{\vec{\lambda}}(\mathfrak{X},\frg,\ell)$ is known as the space of covacua. These are finite dimensional vector spaces and their dimensions are given by the Verlinde formula. 

\subsection{Properties of conformal blocks}We now recall various important properties of conformal blocks. 
\subsubsection{Gauge Symmetry} Let $Y$ be an element of the Lie algebra and $f \in H^0( X, \mathcal{O}_X(\vec{p}))$ and $\langle \Psi| \in \mathcal{V}^{\dagger}_{\vec{\lambda}}(\mathfrak{X},\frg,\ell)$ satisfies the following symmetry:
$\sum_{i=1}^n\langle \Psi| \rho_i(Y\otimes f(\xi_i) \Phi \rangle =0,$
where $|\Phi\rangle \in \mathcal{H}_{\vec{\lambda}}$ and $\rho_i$ is the action on the $i$-th component of $|\Phi\rangle$.  
\subsubsection{Propagation of Vacua} Let $P_{n+1}$ be the new marked point with chosen formal coordinate $\xi_{n+1}$ and $\mathfrak{X}'$ denotes the new data of formal coordinates. Then there is a natural isomorphism 
$$\mathcal{V}_{\vec{\lambda}}(\mathfrak{X},\frg,\ell) \simeq \mathcal{V}_{\vec{\lambda},\omega_0}(\mathfrak{X}',\frg,\ell),$$ where $\omega_0$ is the vacuum representation. This above isomorphism is given by a simple formula. We refer the reader to \cite{TUY} for exact details.

\subsubsection{Conformal Blocks in Family}Let $\mathcal{F}$ be a family of curves with $n$ marked points and chosen formal coordinates, where each curve in the family has at most nodal singularities and satisfies the condition described in the previous section. There exists locally free sheaves $\mathcal{V}^{\dagger}_{\vec{\lambda}}(\mathcal{F},\frg,\ell)$ commuting with base change. Moreover by \cite{Fakh, Tsu}, these spaces conformal blocks can be constructed without choice of formal coordinates and gives a vector bundle $\mathbb{V}_{\vec{\lambda}}(\frg,\ell)$ on $\mon$. 

\subsubsection{KZ/Hitchin connection } If $\mathcal{F}$ is a family of smooth projective curves, then the locally free sheaf $\mathcal{V}^{\dagger}_{\vec{\lambda}}(\mathcal{F},\frg,\ell)$ carries a flat projective connection. This connection is known as the KZ/Hitchin/WZW connection.

\section{Conformal Embeddings}\label{conformal}In this section, we recall basic details about conformal embeddings. We refer the reader to \cite{Kac} for further details. First we recall the notion of Dykin-index of an embedding. Let $\phi : \mathfrak{p} \rightarrow \mathfrak{g}$ be an embedding. We define the Dynkin index $d_{\phi}$ of the embedding $\phi$ to be the ratio of the normalized Cartan-Killing form, i.e.  
$$d_{\phi}\langle x, y\rangle_{\mathfrak{p}}=\langle \phi (x), \phi(y) \rangle_{\mathfrak{g}}$$ It is well known that $d_{\phi}$ is a positive integer. If $\frg$ is simple, we define the conformal anomaly and the trace anomaly to be  $$c(\frg,\ell)=\frac{\ell \dim \frg}{g^*+\ell} \ \mbox{and} \  \Delta_{\lambda}(\frg,\ell)=\frac{(\lambda,\lambda+2\rho)}{2(g^*+\ell)},$$ where $g^*$ is the dual Coxeter number of $\frg$, $\ell$ is a non negative integer and $\lambda$ is a level $\ell$ weight. 

 Let $\mathfrak{p}_1$, $\mathfrak{p}_2$ and $\frg$ be three simple Lie algebras and $\phi: \mathfrak{p}_1\oplus \mathfrak{p}_2 \rightarrow \frg$ be an  embedding of Lie algebras. Let $(\ell_1,\ell_2)$ denote the Dynkin multi-index of the embedding. We now define conformal embedding. 
\begin{definition}
A embedding $\phi$ is conformal if the difference of the conformal anomaly is zero, i.e. 
$c(\mathfrak{p}_1,\ell_1)+c(\mathfrak{p}_2,\ell_2)=c(\frg,1)$
\end{definition}
\begin{example}Many well known examples are conformal. We recall a few of them with their corresponding Dynkin multi-indices. 
\begin{itemize}
\item $\mathfrak{sl}(r) \oplus \mathfrak{sl}(s) \rightarrow \mathfrak{sl}(rs)$ with Dynkin multi-index $(s,r)$.
\item $\mathfrak{so}(p)\oplus \mathfrak{so}(q) \rightarrow \mathfrak{so}(pq)$ with Dynkin multi-index $(q,p)$.
\item $\mathfrak{sp}(2r)\oplus \mathfrak{sp}(2s) \rightarrow \mathfrak{so}(4rs)$ with Dynkin multi-index $(s,r)$. 
\item $\mathfrak{g}_2 \oplus \mathfrak{f}_4 \rightarrow \mathfrak{e}_8$ with Dynkin multi-index $(1,1)$. 
\end{itemize}
\end{example}
Conformal embedding have all been classified \cite{SW} and has been used to produce rank-level dualities of conformal blocks in all known cases. We refer the reader to the works of the  \cite{A}, \cite{BP}, \cite{Bel1}, \cite{Muk1} for further details. Now we recall following \cite{KW}, the main properties of conformal embeddings:
\begin{enumerate}
\item Let $\mathfrak{p}\rightarrow \frg$ be a conformal subalgebra. Then any level one highest weight, irreducible, integrable module $\mathcal{H}_{\Lambda}(\frg,1)$ decomposes into a finite direct sum of $\widehat{\mathfrak{p}}$-modules.
\item Let $\mathfrak{p}\rightarrow \frg$ be a conformal subalgebra. Then for any integer $k$, the actions of  $k$-th Virasoro operator for $\mathfrak{p}$ and $\frg$ acts are the same on $\operatorname{End}(\mathcal{H}_{\Lambda}(\frg,1)$. This property of conformal embeddings tells us that the rank-level duality maps defined using conformal embeddings are flat with respect to the KZ/Hitchin connection.  

\end{enumerate}

%\subsection{Branching Rules}In this section, we recall following \cite{KW}, the main properties of conformal embedding. We begin 
\subsection{Branching Rules for the embedding $\mathfrak{g}_2\oplus \mathfrak{f}_4 \rightarrow \mathfrak{e}_8$}In this section, we recall the branching rules of the conformal embedding $\mathfrak{g}_2 \oplus \mathfrak{f}_4 \rightarrow \mathfrak{e}_8$. The Dykin multi-index of the embedding is $(1,1)$. The level one weights of $\mathfrak{g}_2$ are given by $\{\omega_0,\omega_1\}$ and that of $\mathfrak{f}_4$ is given by $\{\omega_0,\omega_4\}$. The only level one weight of $\mathfrak{e}_8$ is $\omega_0$. We recall the following from \cite{KS}.

\begin{proposition} The level one basic representation of $\widehat{\mathfrak{e}}_8$ decomposes into the following as $\widehat{\mathfrak{g}}_2\oplus \widehat{\mathfrak{f}}_4$ modules. 
$$\mathcal{H}_{\omega_0}(\mathfrak{e}_8,1)\simeq \mathcal{H}_{\omega_0}(\mathfrak{g}_2,1)\otimes \mathcal{H}_{\omega_0}(\mathfrak{f}_4,1)\oplus \mathcal{H}_{\omega_1}(\mathfrak{g}_2,1)\otimes \mathcal{H}_{\omega_4}(\mathfrak{f}_4,1)$$
\end{proposition}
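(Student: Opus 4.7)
The plan is to exploit the structural properties of conformal embeddings recalled immediately above. Since the embedding $\frg_2\oplus\mathfrak{f}_4\hookrightarrow\mathfrak{e}_8$ is conformal with Dynkin multi-index $(1,1)$, property (1) of \cite{KW} tells us that $\mathcal{H}_{\omega_0}(\mathfrak{e}_8,1)$ decomposes as a finite direct sum of level-$(1,1)$ integrable highest weight $\widehat{\frg}_2\oplus\widehat{\mathfrak{f}}_4$-modules. Because $P_1(\frg_2)=\{\omega_0,\omega_1\}$ and $P_1(\mathfrak{f}_4)=\{\omega_0,\omega_4\}$, only four tensor products can occur:
\[
\mathcal{H}_{\omega_0}(\mathfrak{e}_8,1)\simeq\bigoplus_{\substack{\lambda\in\{\omega_0,\omega_1\}\\ \mu\in\{\omega_0,\omega_4\}}} m_{\lambda,\mu}\,\mathcal{H}_{\lambda}(\frg_2,1)\otimes\mathcal{H}_{\mu}(\mathfrak{f}_4,1),
\]
and the task is to determine the four multiplicities $m_{\lambda,\mu}\in\mathbb{Z}_{\geq 0}$.

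Next I would invoke property (2): the Virasoro operator $L_0$ acts by the same operator through either decomposition. On the left, $L_0$ has nonnegative integer eigenvalues since $\Delta_{\omega_0}(\mathfrak{e}_8,1)=0$, while a summand $\mathcal{H}_{\lambda}(\frg_2,1)\otimes\mathcal{H}_{\mu}(\mathfrak{f}_4,1)$ has $L_0$-spectrum contained in $\Delta_{\lambda}(\frg_2,1)+\Delta_{\mu}(\mathfrak{f}_4,1)+\mathbb{Z}_{\geq 0}$. Using the dual Coxeter numbers $g^*(\frg_2)=4$ and $g^*(\mathfrak{f}_4)=9$, a direct computation from the trace-anomaly formula gives
\[
\Delta_{\omega_1}(\frg_2,1)=\tfrac{2}{5},\qquad \Delta_{\omega_4}(\mathfrak{f}_4,1)=\tfrac{3}{5}.
\]
In particular the mixed summands indexed by $(\omega_1,\omega_0)$ and $(\omega_0,\omega_4)$ have lowest $L_0$-eigenvalue in $\tfrac{1}{5}\mathbb{Z}\setminus\mathbb{Z}$, so they cannot occur and $m_{\omega_1,\omega_0}=m_{\omega_0,\omega_4}=0$. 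The remaining candidates $(\omega_0,\omega_0)$ and $(\omega_1,\omega_4)$ have lowest $L_0$-eigenvalues $0$ and $1$, both integral, and hence are consistent.

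Finally I would pin down the surviving multiplicities by comparing the dimensions of the $L_0$-eigenspaces at depths $0$ and $1$. The $L_0=0$ subspace of $\mathcal{H}_{\omega_0}(\mathfrak{e}_8,1)$ is spanned by the vacuum vector, and only the $(\omega_0,\omega_0)$ summand contributes to it, forcing $m_{\omega_0,\omega_0}=1$. The $L_0=1$ subspace of $\mathcal{H}_{\omega_0}(\mathfrak{e}_8,1)$ is canonically the adjoint representation of $\mathfrak{e}_8$, of dimension $248$; on the right-hand side the $L_0=1$ piece of $\mathcal{H}_{\omega_0}(\frg_2,1)\otimes\mathcal{H}_{\omega_0}(\mathfrak{f}_4,1)$ is $\frg_2\oplus\mathfrak{f}_4$ (the two adjoint representations), of dimension $14+52=66$, while the $L_0=1$ (lowest) piece of $\mathcal{H}_{\omega_1}(\frg_2,1)\otimes\mathcal{H}_{\omega_4}(\mathfrak{f}_4,1)$ is $V_{\omega_1}(\frg_2)\otimes V_{\omega_4}(\mathfrak{f}_4)$, of dimension $7\cdot 26=182$. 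The identity $66+182\,m_{\omega_1,\omega_4}=248$ forces $m_{\omega_1,\omega_4}=1$, completing the proof.

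The main obstacle is really the input supplied by property (1) of conformal embeddings (the finiteness of the branching), which is the nontrivial representation-theoretic content and which I would invoke as a black box. Once that finiteness is granted, everything reduces to the bookkeeping of conformal weights and a two-term linear system; the only delicate piece of arithmetic is the precise values $\Delta_{\omega_1}(\frg_2,1)=2/5$ and $\Delta_{\omega_4}(\mathfrak{f}_4,1)=3/5$, whose sum being $1$ is of course forced by the fact that the embedding is conformal and the total central charges match ($14/5+26/5=8=c(\mathfrak{e}_8,1)$).
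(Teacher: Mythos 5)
Your argument is correct, and it is worth noting that the paper does not actually prove this proposition: it is simply quoted from Kac--Sanielevici \cite{KS}, so any self-contained derivation is ``a different route'' by default. What you have written is a clean reconstruction of the standard method for computing branchings of conformal embeddings, and all the ingredients check out: the finiteness of the decomposition and the equality of Sugawara operators are exactly properties (1) and (2) recalled in Section \ref{conformal}; the values $\Delta_{\omega_1}(\frg_2,1)=2/5$ and $\Delta_{\omega_4}(\mathfrak{f}_4,1)=3/5$ agree with those the paper itself uses later in the proof of Proposition \ref{pic}; the exclusion of the mixed summands by non-integrality of conformal weight is valid because the $L_0$-spectrum of the basic representation is $\mathbb{Z}_{\geq 0}$; and the depth-$0$ and depth-$1$ dimension counts ($1$ and $248=66+182$, using that the first null vector of the level-one vacuum module sits at depth $2$, so the degree-one piece is the full adjoint representation) pin down both multiplicities. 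The one input you treat as a black box, the semisimplicity and finiteness of the branching for a conformal embedding, is precisely the content of \cite{KW}, so nothing is circular. Your approach buys a verifiable proof in place of a citation; the paper's approach buys brevity and defers to the tables of \cite{KS}, which were presumably computed by essentially the same conformal-weight and character bookkeeping you carry out here.
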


Let $|{\bf 0}\rangle$ be the highest weight vector for the trivial representation of $\mathfrak{e}_8$. It is easy to see that $|{\bf 0}\rangle$ is the highest weight vector of $\mathcal{H}_{\omega_0}(\mathfrak{e}_8,1)$ and also of the component $\mathcal{H}_{\omega_0}(\mathfrak{g}_2,1)\otimes \mathcal{H}_{\omega_0}(\mathfrak{f}_4,1)$
We now give explicit expressions for the highest vector of the component $\mathcal{H}_{\omega_1}(\mathfrak{g}_2,1)\otimes \mathcal{H}_{\omega_4}(\mathfrak{f}_4,1)$. We will use these explicit expression in our proof of the strange duality. 

\begin{proposition}There exists an unique root $\alpha$ of $\mathfrak{e}_8$ such that the highest weight vector of $\mathcal{H}_{\omega_1}(\mathfrak{g}_2,1)\otimes \mathcal{H}_{\omega_4}(\mathfrak{f}_4,1)$ considered as an element of $\mathcal{H}_{\omega_0}(\mathfrak{e}_8,1)$ is given by $X_{\alpha}(-1)|{\bf 0}\rangle$, where $X_{\alpha}$ is a non-zero element in the root space of $\alpha$. 

\end{proposition}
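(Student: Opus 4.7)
The plan is to pin down the highest weight vector by working inside the degree-one graded piece of $\mathcal{H}_{\omega_0}(\mathfrak{e}_8,1)$, combining the branching of the $\mathfrak{e}_8$ adjoint representation under $\mathfrak{g}_2\oplus\mathfrak{f}_4$ with the matching of Virasoro operators guaranteed by the conformal embedding.

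First I would establish that the relevant highest weight vector lies in the degree-one component. The conformal embedding property ensures that the decomposition of the previous proposition is graded by conformal weight, and since $\mathcal{H}_{\omega_0}(\mathfrak{e}_8,1)$ has integer conformal weights, the minimal weight in the summand $\mathcal{H}_{\omega_1}(\mathfrak{g}_2,1)\otimes\mathcal{H}_{\omega_4}(\mathfrak{f}_4,1)$ is a positive integer. On the other hand, the finite-dimensional branching $\mathfrak{e}_8\simeq \mathfrak{g}_2\oplus\mathfrak{f}_4\oplus (V_{\omega_1}\boxtimes V_{\omega_4})$ gives exactly $14+52+182=248$, matching the dimension of the adjoint realized in the degree-one component and forcing the minimal piece of the second summand to sit in the degree-one subspace, which is spanned by $\{X_\beta(-1)|\mathbf{0}\rangle\}_{\beta\in\Delta(\mathfrak{e}_8)}\cup\{h(-1)|\mathbf{0}\rangle\}_{h\in\mathfrak{h}(\mathfrak{e}_8)}$.

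Next I would single out the candidate root. A highest weight vector has $\mathfrak{h}(\mathfrak{g}_2)\oplus\mathfrak{h}(\mathfrak{f}_4)$-weight $(\omega_1,\omega_4)$, which is non-zero, so the Cartan vectors $h(-1)|\mathbf{0}\rangle$ cannot contribute; the vector must be a scalar multiple of $X_\alpha(-1)|\mathbf{0}\rangle$ for some $\alpha\in\Delta(\mathfrak{e}_8)$ whose restriction to $\mathfrak{h}(\mathfrak{g}_2)\oplus\mathfrak{h}(\mathfrak{f}_4)$ equals $(\omega_1,\omega_4)$. Uniqueness of $\alpha$ follows because the $(\omega_1,\omega_4)$-weight space of $V_{\omega_1}\boxtimes V_{\omega_4}$ is one-dimensional, being the highest weight of both factors, and this weight does not occur in $\mathfrak{g}_2\oplus\mathfrak{f}_4$.

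Finally I would verify that $X_\alpha(-1)|\mathbf{0}\rangle$ is genuinely annihilated by the positive part of $\widehat{\mathfrak{p}}$. Using $\mathfrak{e}_8\cdot |\mathbf{0}\rangle=0$ together with the affine bracket $[Y(m),X_\alpha(-1)]=[Y,X_\alpha](m-1)+m(Y,X_\alpha)\,\delta_{m,1}\,c$, the only potentially non-trivial cases are $Y=X_\beta$ a positive root vector of $\mathfrak{g}_2$ or $\mathfrak{f}_4$ at $m=0$, and any $Y\in\mathfrak{p}$ at $m=1$. For the first case, $\alpha+\beta$ cannot be a root of $\mathfrak{e}_8$: its $\mathfrak{h}(\mathfrak{p})$-restriction strictly exceeds $(\omega_1,\omega_4)$ in dominance order on $V_{\omega_1}\boxtimes V_{\omega_4}$, while the non-vanishing of both components rules out membership in $\Delta(\mathfrak{g}_2)\cup\Delta(\mathfrak{f}_4)$. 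For the second case, $(Y,X_\alpha)=0$ because $\mathfrak{p}$ and $V_{\omega_1}\boxtimes V_{\omega_4}$ are inequivalent irreducible $\mathfrak{p}$-summands of $\mathfrak{e}_8$, hence orthogonal under the invariant Killing form. The main subtlety is in controlling the conformal-weight matching and the Killing-form orthogonality; once these are in place, existence, uniqueness, and the highest-weight property follow without further effort.
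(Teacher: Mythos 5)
Your proof is correct and takes essentially the same route as the paper: both rest on the adjoint branching $\mathfrak{e}_8\simeq \mathfrak{g}_2\oplus\mathfrak{f}_4\oplus V_{\omega_1}\otimes V_{\omega_4}$, deduce uniqueness of $\alpha$ from the one-dimensionality of the $(\omega_1,\omega_4)$-weight space, and check the highest-weight property by the affine commutator computation against $|{\bf 0}\rangle$. Your version adds the conformal-weight/dimension-count localization to the degree-one piece and, usefully, treats the central term $m(Y,X_\alpha)\delta_{m,1}c$ explicitly via orthogonality of inequivalent $\mathfrak{p}$-summands, a point the paper's displayed computation passes over silently.
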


\begin{proof}
The Lie algebra $\mathfrak{e}_8$ can be considered as a $\mathfrak{g}_2\oplus \mathfrak{f}_4$-module under the adjoint action. By a result in \cite{JFA}, the decomposition is given as follows:
$$\mathfrak{e}_8\simeq \mathfrak{g}_2\otimes \mathbb{C}\oplus \mathbb{C}\otimes \mathfrak{f}_4 \oplus V_{\omega_1}(\mathfrak{g}_2)\otimes V_{\omega_4}(\mathfrak{f}_4),$$ where $V_{\lambda}$ denotes the finite dimensional irreducible representation of a Lie algebra with highest weight $\lambda$. We observe that in the above decomposition the weight space of weight $(\omega_1,\omega_4)$ is one dimensional. It follows that there exists a unique root $\alpha$ of $\mathfrak{e}_8$ such that $\alpha$ restricted to the Cartan subalgebra of $\mathfrak{g}_2\oplus \mathfrak{f}_4$ is $(\omega_1,\omega_4)$. 

%\item First we observe that since the highest weight of the adjoint representation of $\mathfrak{e}_8$ is $\theta$, the root $\theta$ of $\mathfrak{e}_8$ restrict to $(\omega_2,0)$ in $\frg_2\oplus \mathfrak{f}_4$. This was calculated by E. Dynkin \cite{D}. 

%\end{itemize}
This implies that the corresponding roots spaces $X_{\alpha}$, is in $V_{\omega_1}\otimes V_{\omega_4}$. Now working with the opposite Borel and repeating the same argument, we can see that $X_{-\alpha}$ is also an element of $V_{\omega_1}\otimes V_{\omega_4}$.

Since $X_{\alpha}$ is the highest weight vector of $V_{\omega_1}\otimes V_{\omega_4}$, we get $[x,X_{\alpha}]=0$ for all $x$ in the positive nilpotent of $\mathfrak{g}_2\oplus \mathfrak{f}_4$. For $x$ as above and $n\geq 0$, we consider the following:
\begin{eqnarray*}
x(n)X_{\alpha}(-1)|{\bf 0 \rangle}&=&X_{\alpha}(-1)x(n)|{\bf 0 \rangle} + [x(n),X_{\alpha}(-1)]|{\bf 0}\rangle\\
&=& [x,X_{\alpha}](n-1)|{\bf 0} \rangle
\end{eqnarray*}
But since $X_{\alpha}$ is the highest weight vector we get $[x,X_{\alpha}]$ is zero. Hence $x(n)X_{\alpha}(-1)|{\bf 0 \rangle}=0$. The argument for the $x$ in the opposite Borel of $\mathfrak{g}_2\oplus \mathfrak{f}_4$ is similar. 
\end{proof}
%\begin{lemma}\label{fun1}
%Let $X_{\beta}$ be a root space of $\frg_2$ or of  $\mathfrak{f}_4$. Then $\phi(X_{\beta})$ can be written as $\sum_{\gamma \in I_{\beta}}a_{\gamma}X_{\gamma}$, where $I_{\beta}$ is %the set of roots of $\mathfrak{e}_8$ that appears in the sum. Then $\alpha \notin I_{\gamma}$.
%\end{lemma}
%\begin{proof}
%Since the image of $\mathfrak{g}_2$ and $\mathfrak{f}_4$ is disjoint. We can show that $\gamma(\phi(h_2))=0$ for any $h_2$ in the Cartan of $\mathfrak{f}_4$. But we know that $\alpha$ restrict to $\omega_4$ in $\mathfrak{f}_4$. So the lemma holds. 

%\end{proof}
\begin{remark}
The highest root $\theta$ of $\mathfrak{e}_8$ restricts to the root $\omega_1$ of $\mathfrak{g}_2$ and $\omega_4$ on $\mathfrak{f}_4$. Further the root $2\alpha_1+3\alpha_2+4\alpha_3+6\alpha_4+5\alpha_5+4\alpha_6+3\alpha_7+\alpha_8$ of $E_8$ restricts to $\omega_1$ (adjoint representation) of $\mathfrak{f}_4$ and zero on $\mathfrak{g}_2$ and the root $\alpha_1+2\alpha_2+2\alpha_3+3\alpha_4+3\alpha_5+2\alpha_6+\alpha_7+\alpha_8$ restricts to zero on $\mathfrak{f}_4$ and $\omega_2$ (adjoint representation) of $\mathfrak{g}_2$. This can be checked by a computer using \cite{Lie}. I thank Jeffrey Adams for this.
\end{remark}

We denote a Cartan subalgebra of $\mathfrak{e}_8$ by $\mathfrak{h}$ and chose a Cartan subalgebra $\mathfrak{h}_1$ (respectively $\mathfrak{h}_2$) for $\mathfrak{g}_2$ (respectively $\mathfrak{f}_4$) such that $\phi: \mathfrak{h}_1\oplus \mathfrak{h}_2 \hookrightarrow \mathfrak{h}$.
\begin{proposition}\label{weird}
There exists a root $\beta$ of $\mathfrak{e}_8$ and an $H$ in $\mathfrak{h}\backslash \mathfrak{h}_1\oplus \mathfrak{h}_2$ such that $\beta(H) \neq 0$ and $X_{\beta}$ is not in the image of $\mathfrak{g}_2\oplus \mathfrak{f}_4$. 
\end{proposition}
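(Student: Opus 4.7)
The approach is to combine the $\mathfrak{g}_2 \oplus \mathfrak{f}_4$-module decomposition $\mathfrak{e}_8 \simeq \mathfrak{g}_2 \oplus \mathfrak{f}_4 \oplus W$ with $W = V_{\omega_1}(\mathfrak{g}_2) \otimes V_{\omega_4}(\mathfrak{f}_4)$, from the previous proposition, with a concrete analysis of $\mathrm{ad}(H)$ on a carefully chosen weight space. First I would locate $\mathfrak{h} \setminus (\mathfrak{h}_1 \oplus \mathfrak{h}_2)$ explicitly. Since $\mathfrak{h}$ is abelian and contains $\mathfrak{h}_1 \oplus \mathfrak{h}_2$, it is contained in the centralizer of $\mathfrak{h}_1 \oplus \mathfrak{h}_2$ in $\mathfrak{e}_8$; decomposing this centralizer via the above summands and using $\dim(V_{\omega_1})_0 = 1$ and $\dim(V_{\omega_4})_0 = 2$, the centralizer has dimension $2 + 4 + 2 = 8 = \dim \mathfrak{h}$, so
\[
\mathfrak{h} \;=\; (\mathfrak{h}_1 \oplus \mathfrak{h}_2) \;\oplus\; W_0, \qquad W_0 := (V_{\omega_1})_0 \otimes (V_{\omega_4})_0.
\]
Thus any nonzero $H \in W_0$ automatically lies in $\mathfrak{h} \setminus (\mathfrak{h}_1 \oplus \mathfrak{h}_2)$.

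Next, I would fix a short root $\alpha$ of $\mathfrak{g}_2$. Because $\alpha$ appears as a multiplicity-one weight of the $7$-dimensional representation $V_{\omega_1}$, the $(\alpha,0)$-weight space of $\mathfrak{e}_8$ under $\mathfrak{h}_1 \oplus \mathfrak{h}_2$ is
\[
\mathfrak{g}_2^{\alpha} \;\oplus\; W_{(\alpha,0)}, \qquad W_{(\alpha,0)} = (V_{\omega_1})_\alpha \otimes (V_{\omega_4})_0,
\]
of dimension $3$, and its intersection with the subalgebra $\mathfrak{g}_2 \oplus \mathfrak{f}_4$ is exactly the line $\mathfrak{g}_2^{\alpha}$.

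The main computation is to show that $\mathrm{ad}(H)$ fails to preserve this splitting. Writing any nonzero $H \in W_0$ as $v_0 \otimes w$ with $v_0$ spanning $(V_{\omega_1})_0$ and $0 \neq w \in (V_{\omega_4})_0$, and using that the $\mathfrak{g}_2 \oplus \mathfrak{f}_4$-module structure on $W$ is induced by the adjoint action of $\mathfrak{e}_8$, one computes
\[
[X_\alpha, H] \;=\; X_\alpha \cdot H \;=\; (X_\alpha v_0) \otimes w \;\in\; W_{(\alpha,0)}.
\]
That $X_\alpha v_0 \neq 0$ is a small $\mathfrak{sl}_2$-calculation: under the copy of $\mathfrak{sl}_2$ generated by $X_{\pm\alpha}$ (with $\alpha$ short), $V_{\omega_1}$ breaks up as $V(2) \oplus V(1)^{\oplus 2}$, so $v_0$ lies in the $V(2)$-summand and is not annihilated by $X_\alpha$. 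Hence $\mathrm{ad}(H)(X_\alpha)$ is a nonzero element of $W_{(\alpha,0)}$, and in particular $\mathfrak{g}_2^\alpha$ is not $\mathrm{ad}(H)$-stable.

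To conclude, since $H \in \mathfrak{h}$, the operator $\mathrm{ad}(H)$ is semisimple on the $3$-dimensional $(\alpha,0)$-weight space, and its eigenvectors are root vectors of $\mathfrak{e}_8$ for the three roots $\beta$ restricting to $(\alpha,0)$. If any such eigenvector lay in $\mathfrak{g}_2^{\alpha}$, say it equaled $c X_\alpha$ with $c \neq 0$, then $\mathrm{ad}(H)(c X_\alpha) = c(X_\alpha v_0)\otimes w$ would have to be proportional to $c X_\alpha$; but the left-hand side is a nonzero vector in $W_{(\alpha,0)}$, which meets $\mathfrak{g}_2^\alpha$ only in $0$, a contradiction. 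So none of these three root vectors lies in $\mathfrak{g}_2 \oplus \mathfrak{f}_4$. Since $\mathrm{ad}(H)$ is nonzero and semisimple on a $3$-dimensional space, at least one eigenvalue is nonzero; the corresponding root $\beta$ together with our chosen $H$ satisfies $\beta(H) \neq 0$, $H \notin \mathfrak{h}_1 \oplus \mathfrak{h}_2$, and $X_\beta \notin \mathfrak{g}_2 \oplus \mathfrak{f}_4$, as required. The only slightly delicate input is the identification $\mathfrak{h} = (\mathfrak{h}_1 \oplus \mathfrak{h}_2) \oplus W_0$; once this structural fact is in hand, the rest of the argument reduces to a direct $\mathfrak{sl}_2$-computation.
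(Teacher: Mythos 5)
Your argument is correct, but it is a genuinely different (and considerably longer) route than the paper's. The paper's proof is essentially two lines: since $\mathfrak{g}_2\oplus\mathfrak{f}_4$ has rank $6<8$, some coroot $\beta^{\vee}$ of $\mathfrak{e}_8$ does not lie in $\mathfrak{h}_1\oplus\mathfrak{h}_2$; writing $\beta^{\vee}=\phi(h_1)+\phi(h_2)+H$ with $H$ orthogonal to $\phi(\mathfrak{h}_1\oplus\mathfrak{h}_2)$ gives $\beta(H)=(H,H)\neq 0$ for free, and $X_{\beta}\notin\mathfrak{g}_2\oplus\mathfrak{f}_4$ because otherwise $X_{-\beta}$, and hence $[X_{\beta},X_{-\beta}]\propto\beta^{\vee}$, would lie in $\mathfrak{g}_2\oplus\mathfrak{f}_4$, forcing $\beta^{\vee}\in\mathfrak{h}_1\oplus\mathfrak{h}_2$. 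That argument chooses $\beta$ and $H$ from the same coroot, so the nonvanishing $\beta(H)\neq 0$ is automatic, and it works verbatim for any non-maximal-rank embedding. Your proof instead pins down $\mathfrak{h}=(\mathfrak{h}_1\oplus\mathfrak{h}_2)\oplus(V_{\omega_1})_0\otimes(V_{\omega_4})_0$ via the centralizer count (a nice structural fact the paper never makes explicit), and then runs an $\mathfrak{sl}_2$-computation on the three-dimensional $(\alpha,0)$-weight space; this buys more — it shows that \emph{every} $\mathfrak{e}_8$-root vector restricting to $(\alpha,0)$ lies outside $\mathfrak{g}_2\oplus\mathfrak{f}_4$ and that \emph{any} nonzero $H$ in the complement works — but at the cost of invoking the specific branching $\mathfrak{e}_8\simeq\mathfrak{g}_2\oplus\mathfrak{f}_4\oplus V_{\omega_1}\otimes V_{\omega_4}$ and the zero-weight multiplicities of the $7$- and $26$-dimensional representations. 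One small imprecision: your statement that the eigenvectors of $\mathrm{ad}(H)$ on the $(\alpha,0)$-space \emph{are} the root vectors presumes the three values $\beta_i(H)$ are distinct, which you do not verify; fortunately your actual contradiction (if $X_{\beta_i}=cX_{\alpha}$ then $\mathrm{ad}(H)(cX_\alpha)$ would lie in $\mathfrak{g}_2^{\alpha}\cap W_{(\alpha,0)}=0$) never uses this, so it is a wording issue rather than a gap.
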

\begin{proof}
First we claim that there exists a coroot $\beta^{\vee}$ of $\mathfrak{e}_{8}$ which is not in the image of $\mathfrak{h}_1\oplus \mathfrak{h}_2$. This is possible since the embedding is not of full rank. 
%There exists a root $\beta$ of $\mathfrak{e}_8}$ such that $\beta$ restricted to each of the Cartan of $\frg_2$ and $\mathfrak{f}_4$ is non zero
%\end{lemma}\label{fun5}
%\begin{proof}
%We can find $\beta^{\vee} \in \mathfrak{h}\backslash \mathfrak{h}_1\oplus \mathfrak{h}_2$. 
We can write $\beta^{\vee}=\phi(h_1)+\phi(h_2)+H$ such that $H$ is orthogonal to $\phi(\mathfrak{h}_1\oplus \mathfrak{h}_2)$. Now $\beta(H)=(H,H)\neq 0$. %But now by the same argument $\beta( h_1)$ and $\beta(h_2)$ are non zero. 

Next we claim that $X_{\beta}$ can not be in the image of $\frg_2 \oplus \mathfrak{f}_4$. If $X_{\beta}$ is in $\frg_2\oplus \mathfrak{f}_4$, by working with the opposite Borel, it follows that $X_{-\beta}$ is also in $\frg_2\oplus \mathfrak{f}_4$. Now $[X_{\beta},X_{-\beta}]$ is also an element of $\frg_2 \oplus \mathfrak{f}_4$. This is a contradiction, since $\beta^{\vee}$ is not in $\mathfrak{h}_1\oplus \mathfrak{h}_2$. This proves the proposition.

\end{proof}

\subsection{Description of the rank-level duality map }From the branching rules described in the previous section, we know that $\mathcal{H}_{\omega_0}\otimes \mathcal{H}_{\omega_0}$ and $\mathcal{H}_{\omega_1}\otimes \mathcal{H}_{\omega_4}$ appears in the decomposition of $\mathcal{H}_{\omega}(\mathfrak{e}_8,1)$. So we let $\vec{\lambda}=(\lambda_1,\dots,\lambda_n)$( respectively $\vec{\mu}$) be an $n$-tuple of level one weights of $\frg_2$ ( respectively $\mathfrak{f}_4$) and $\vec{\Lambda}$ be an $n$-tuple of $\omega_0$'s such that $\mathcal{H}_{\lambda_i}\otimes \mathcal{H}_{\mu_i}$ appears in the decomposition $\mathcal{H}_{\omega_0}(\mathfrak{e}_8,1)$. Taking tensor product, we get a map 
$$\otimes_{i=1}^n \mathcal{H}_{\lambda_i}(\frg_2,1)\otimes \mathcal{H}_{\mu_i}(\mathfrak{f}_4,1) \rightarrow \otimes_{i=1}^n \mathcal{H}_{\omega_0}(\mathfrak{e}_8,1)$$ 
Taking coinvariants we get a map between the following rank-level duality map between the spaces of covacua 
\begin{equation}\label{rld}
\mathcal{V}_{\vec{\lambda}}(\mathfrak{X},\frg_2,1)\otimes\mathcal{V}_{\vec{\mu}}(\mathfrak{X},\mathfrak{f}_4,1)\rightarrow \mathcal{V}_{\vec{\Lambda}}(\mathfrak{X},\mathfrak{e}_8,1),
\end{equation}
where $\mathfrak{X}$ is the data associated to a $n$-pointed nodal curve $X$ of arithmetic genus $g$ with formal coordinates around the marked points. It follows from \cite{Fal} that the dimension of $\mathcal{V}_{\vec{\Lambda}}(\mathfrak{X},\mathfrak{e}_8, 1)$ is one. Hence one can ask is the following map (well defined up to constants) an isomorphism
$$\mathcal{V}_{\vec{\lambda}}(\mathfrak{X},\frg_2,1)\rightarrow \mathcal{V}^{\dagger}_{\vec{\mu}}(\mathfrak{X},\mathfrak{f}_4,1).$$ 
%\subsection{Reformulation of Strange duality in terms of Rank-level duality map}
In the next two sections, we discuss the proof Theorem \ref{main2}.

\section{The proof of Theorem \ref{main2}: Main reductions}\label{proof}
We now discuss the main reductions in the proof of Theorem \ref{main2}. We closely follow the general strategy of rank-level duality adopted in \cite{Muk1}. We will point out key differences in this special case:
\subsubsection{Equality of dimensions} It is shown in \cite{Fakh} that the dimension of the conformal block on $\mathbb{P}^1$ with $n$ marked point and weights $\omega_1$ (respectively $\omega_4$) for the Lie algebra $\frg_2$ (respectively $\mathfrak{f}_4$) is given by the $(n-1)$-th Fibonacci number $\operatorname{Fib}(n-1)$. Using this and the factorization ( see \cite{TUY}), we can show that the  the dimensions of the source and the target in Theorem \ref{main2} are the same. It is given by the following formula:
$$\dim\mathcal{V}_{\vec{\omega}_1}(\mathfrak{X},\frg_2,1)=\bigg( \frac{5+\sqrt{5}}{2}\bigg)^{g-1}\bigg(\frac{\sqrt{5}+1}{2}\bigg)^{n}+\bigg(\frac{5-\sqrt{5}}{2}\bigg)^{g-1}\bigg(\frac{1-\sqrt{5}}{2}\bigg)^n.$$ 
%This can be calculated using the genus zero case and for the factorization of conformal blocks. 
It is important to point out that we do not explicitly need the equality of the dimensions in the proof of Theorem \ref{main2}. 

\subsection{Reductions to genus zero}This step is similar to steps in \cite{BP, Muk1}. The key ingredients are the flatness of the rank-level duality under the KZ connection, factorization theorem  in \cite{TUY} and the compatibility of rank-level duality with factorization shown in \cite{BP}.  We divide this into various small steps. 
%\begin{enumerate}

\subsubsection{} Equality of Virasoro operators tell us that the rank-level duality is flat under the $KZ/Hitchin$ connection. In particular, it implies that if rank-level duality holds for a special curve, it holds for all curves. 

\subsubsection{} First, we consider a one parameter family $\mathcal{F}$ of curves over $\operatorname{Spec}{\mathbb{C}}[[t]]$, where the generic fiber is a smooth curve $X$ of a fixed genus $g\geq 1$ and the special fiber is a nodal curve $X_0$ of arithmetic genus $g$. the normalization $\tilde{X_0}$ of $X_0$ is a smooth curve of genus $g-1$. The factorization theorem in \cite{TUY} identities conformal blocks on $X_0$ with a direct sum of conformal blocks over $\tilde{X_0}$. A sheaf theoretic version also holds. This is known as the sewing procedure. We refer the reader to \cite{BP, Muk1, TUY} for more details on sewing procedure.

\subsubsection{} In \cite{BP}, combining the sewing procedure in \cite{TUY}, degenerations of the rank-level duality maps along $\mathcal{F}$ have been studied. The level one weights of $\mathfrak{g}_2$ are $\omega_0, \omega_1$ and the level weights of $\mathfrak{f}_4$ are $\omega_0,\omega_4$. The only level one weight of $\mathfrak{e}_8$ is $\omega_0$. The branching rule for the conformal embedding $\mathfrak{g}_2\oplus \mathfrak{f}_4 \rightarrow \mathfrak{e}_8$ has the following important feature: Given a level one weight $\lambda$ of $\mathfrak{g}_2$, there exists a unique level weight $\mu$ of $\mathfrak{f}_4$ such that 
$\mathcal{H}_{\lambda}(\frg_2,1)\otimes \mathcal{H}_{\mu}(\mathfrak{f}_4,1)$ appears in the branching of $\mathcal{H}_{\omega_0}(\mathfrak{e}_8,1)$. This uniqueness property combined with compatibility results in \cite{BP} and in \cite{Muk1} guarantees that rank-level duality on smooth curves of genus $g$ reduces to smooth curves of genus $g-1$. Hence if we continue to repeat the process, we will reduce to show that rank-level duality holds on $\mathbb{P}^1$ with arbitrary number of marked points.

%\end{enumerate} 
\subsection{Further reductions to the three pointed case}Let $X_0=X_1\cup X_2$ be a rational nodal curve with $n$ marked points and $X_1$ and $X_2$ are two components with $n_1$ and $n_2$ marked points isomorphic to $\mathbb{P}^1$. As before, we put $X_0$ in a family $\mathcal{F}$ such that the special fiber is $X_0$ and the generic fiber is $\mathbb{P}^1$ with $n$ marked points. The same methods as above further reduces the problem to proving rank-level duality on $\mathbb{P}^1$ with $\operatorname{max}(n_1,n_2)$ points. Repeating this process, we are reduced to show rank-level duality for $\mathbb{P}^1$ with three marked points.

\subsection{The three point case}The list of all possible rank-level dualities maps on $\mathbb{P}^1$ with three marked points is given in Section \ref{proofofmain}. All the conformal blocks that we handle on $\mathbb{P}^1$ with three points are one dimensional. This follows directly from the dimension formula. Thus to show that Theorem \ref{main2} holds it is enough to show all the rank-level duality maps on $\mathbb{P}^1$ with three marked points are non zero. The strategy to show this is similar to the strategy used in \cite{Muk1}. Let $\langle{\Psi}|$ be the unique non element of $\mathcal{V}^{\dagger}_{\omega_0, \omega_0, \omega_0}(\mathbb{P}^1, \mathfrak{e}_8,1)$. We explicitly construct vectors $v_1, v_2, v_3$ in $\mathcal{H}_{\omega_1}(\frg_2,1)\otimes \mathcal{H}_{\omega_4}(\mathfrak{f}_4,1)$ such that $\langle \Psi | v_1\otimes v_2\otimes v_3\rangle \neq 0$. We make use of the ``gauge-symmetry" of $\langle \Psi |$.

%\section{Strange duality Map between Verlinde Spaces}
%\subsection{Moduli stack of $G$ bundles and non abelian theta functions}
%\subsection{Canonical Theta divisor for $E_8$}

%\section{Proof of main theorem}
%\subsection{Equality of Dimensions}
%\subsection{Reduction to the case of $\mathbb{P}^1$ with $n$ marked points}
%\subsection{Reduction to the case of $\mathbb{P}^1$ with three marked points}
\section{Proof of the main theorem in the case of $\mathbb{P}^1$ with three marked points}\label{proofofmain}
In this section, we show that rank-level duality map between the following one dimensional conformal blocks is non-zero. 
\begin{enumerate}
\item $\mathcal{V}_{(\omega_0,\omega_0,\omega_0)}(\mathbb{P}^1,\mathfrak{g}_2,1)\otimes \mathcal{V}_{(\omega_0,\omega_0,\omega_0)}(\mathbb{P}^1,\mathfrak{f}_4,1) \rightarrow \mathcal{V}_{(\omega_0,\omega_0,\omega_0)}(\mathbb{P}^1,\mathfrak{e}_8,1)$
\item $\mathcal{V}_{(\omega_0,\omega_1,\omega_1)}(\mathbb{P}^1,\mathfrak{g}_2,1)\otimes \mathcal{V}_{(\omega_0,\omega_4,\omega_4)}(\mathbb{P}^1,\mathfrak{f}_4,1) \rightarrow \mathcal{V}_{(\omega_0,\omega_0,\omega_0)}(\mathbb{P}^1,\mathfrak{e}_8,1)$
\item $\mathcal{V}_{(\omega_1,\omega_1,\omega_1)}(\mathbb{P}^1,\mathfrak{g}_2,1)\otimes \mathcal{V}_{(\omega_4,\omega_4,\omega_4)}(\mathbb{P}^1,\mathfrak{f}_4,1) \rightarrow \mathcal{V}_{(\omega_0,\omega_0,\omega_0)}(\mathbb{P}^1,\mathfrak{e}_8,1)$
\end{enumerate}

\subsection{Strategy} We first discuss the strategy to handle the above cases. Let $\langle \Psi|$ denote the non zero element of $\mathcal{V}^{\dagger}_{(\omega_0,\omega_0,\omega_0)}(\mathbb{P}^1,\mathfrak{e}_8,1)$. Let $z$ be a global coordinate of $\mathbb{C}$ and without loss of generality assume that the three marked points are $1$, $\infty$ and $0$ with the obvious choice of local coordinates. 

If we can produce elements $|\Phi_1\rangle$, $|\Phi_2\rangle$ and $|\Phi_3\rangle$ such that $|\Phi_1\otimes \Phi_2 \otimes \Phi_3 \rangle \in \bigotimes_{i=1}^3\mathcal{H}_{\lambda_i}(\mathfrak{g}_2,1)\otimes \mathcal{H}_{\mu_i}(\mathfrak{f}_4,1) \subset \mathcal{H}_{\omega_0}(\mathfrak{e}_8,1)^{\otimes 3}$ and $\langle \Psi | \Phi_1\otimes \Phi_2 \otimes \Phi_3 \rangle \neq 0$, we will be done. Here $(\lambda_i,\mu_i)$ are level one weights that appear in the branching of the affine weight $\omega_0$ of $\mathfrak{e}_8$. 

We choose $|\Phi_2 \rangle$ to be the highest weight vector for the component $\mathcal{H}_{\lambda_2}(\mathfrak{g}_2,1)\otimes \mathcal{H}_{\mu_2}(\mathfrak{f}_4,1)$ and $|\Phi_3\rangle$ to be the highest weight of $\mathcal{H}_{\lambda_2}(\mathfrak{g}_2,1)\otimes \mathcal{H}_{\mu_2}(\mathfrak{f}_4,1)$ with respect to the opposite finite dimensional Borel. This is possible since the longest element of the Weyl group of $\mathfrak{e}_8$ restricts to the longest element of the Weyl group of the sub algebras. Final we choose $|\Phi_1\rangle$ such that $\mathfrak{h}$ weights of $|\Phi_1\otimes \Phi_2\otimes \Phi_3\rangle $ is zero, where $\mathfrak{h}$ is the Cartan subalgebra of $\mathfrak{e}_8$. We now use the Gauge condition for conformal blocks to show that $\langle \Psi | \Phi_1\otimes \Phi_2 \otimes \Phi_3 \rangle \neq 0$. We now complete the proof of the rank-level duality. The proof has been divided to three cases to deal with the three non equivalent cases discussed above.

\subsection{Case I} Let $\langle \Psi| $ be the nonzero element of $\mathcal{V}^{\dagger}_{(\omega_0,\omega_0,\omega_0)}(\mathbb{P}^1,\mathfrak{e}_8,1)$ and let as before $|\bf{0}\rangle$ denote a non zero element of the trivial representation of $\mathfrak{e}_8$. As discussed before $|\bf{0}\rangle$ is also the highest weight vector of $\mathcal{H}_{\omega_0}(\mathfrak{e}_8,1)$ and is also the highest vector of the component $\mathcal{H}_{\omega_0}(\mathfrak{g_2},1)\otimes \mathcal{H}_{\omega_0}(\mathfrak{f_4},1)$. As discussed in the strategy above we choose $|\Phi_1\rangle =|\Phi_2\rangle = |\Phi_3 \rangle = |\bf{0}\rangle $. Clearly by definition, we get $\langle \Psi | \bf{0}\otimes \bf{0}\otimes \bf{0}\rangle$ is non-zero.

\subsection{Case II} Recall that the highest weight vector of the component $\mathcal{H}_{\omega_1}(\mathfrak{g}_2,1)\otimes \mathcal{H}_{\omega_4}(\mathfrak{f}_4,1)$ is of the form $X_{\alpha}(-1)|\bf{0}\rangle$, where $\alpha$ is a root of $\mathfrak{e}_8$ and $X_{\alpha}$ is a non zero element in the $\alpha$-th root space of $\mathfrak{e}_8$. Since the opposite Borel of $\mathfrak{e}_8$ restrict to the opposite Borel of $\mathfrak{g}_2\oplus \mathfrak{f}_4$, we get that $X_{-\alpha}(-1)|\bf{0}\rangle$ is also in $\mathcal{H}_{\omega_1}(\mathfrak{g}_2,1)\otimes \mathcal{H}_{\omega_4}(\mathfrak{f}_4,1)$. 

Following our strategy, we choose $|\Phi_1 \rangle = | \bf{0}\rangle $, $|\Phi_2\rangle= X_{\alpha}(-1)|\bf{0}\rangle $ and $|\Phi_3\rangle = X_{-\alpha}(-1)|\bf{0}\rangle$. We now have the following: 
\begin{eqnarray*}
&&-\langle \Psi | {\bf 0}\otimes X_{\alpha}(-1) {\bf0} \otimes X_{-\alpha}(-1){\bf 0}\rangle\\
%&&\ \ =-\langle \Psi | {\bf 0}\otimes X_{\alpha}(-1) {\bf0} \otimes (X_{-\alpha}\otimes \frac{1}{\xi_3}){\bf 0}\rangle\\
&&\ \ =\langle \Psi | X_{-\alpha} {\bf 0} \otimes X_{\alpha}(-1) {\bf 0} \otimes {\bf 0}\rangle + \langle \Psi | {\bf 0} \otimes X_{-\alpha}(1)X_{\alpha}(-1) {\bf0} \otimes {\bf 0}\rangle \ \ \mbox{( By Gauge condition)}\\
&& \ \ = 0 + \langle \Psi | {\bf 0} \otimes X_{-\alpha}(1)X_{\alpha}(-1) {\bf0} \otimes {\bf 0}\rangle\\
&& \ \ =\langle \Psi | {\bf 0} \otimes X_{\alpha}(-1) X_{-\alpha}(1) {\bf0} \otimes {\bf 0}\rangle + \langle \Psi | {\bf 0} \otimes [X_{-\alpha}(1), X_{\alpha}(-1)] {\bf0} \otimes {\bf 0}\rangle \\
&&\ \ =  \langle \Psi | {\bf 0} \otimes ([X_{-\alpha}, X_{\alpha}] + c\langle X_{-\alpha}, X_{\alpha}\rangle) {\bf0} \otimes {\bf 0}\rangle \\
&&\ \ = \langle X_{-{\alpha}}, X_{\alpha}\rangle \langle \Psi | {\bf 0 \otimes 0 \otimes 0 }\rangle\\
&& \ \ \neq 0
\end{eqnarray*}
\begin{remark}The same proof work if we choose any $\beta$ such that $X_{\beta}$ in $V_{\omega_1}\otimes V_{\omega_4}$

\end{remark}
\subsection{Case III}Let $\beta$ and $H$ be as in Proposition \ref{weird}. In this case $|\Phi_2\rangle =X_{\beta}(-1)|{\bf 0}\rangle$ and $|\Phi_3\rangle=X_{-\beta}(-1)|{\bf0}\rangle$. Now we choose $|\Phi_1(-1)\rangle =  H(-1)|{\bf 0} \rangle$. % Now it follows that $H(-1)|{\bf 0 }\rangle \in  \mathcal{H}_{\omega_1}(\mathfrak{g}_2,1)\otimes \mathcal{H}_{\omega_4}(\mathfrak{f}_4,1)$. 
\begin{eqnarray*}
&&-\langle \Psi | H(-1){\bf 0}\otimes X_{\beta}(-1) {\bf0} \otimes X_{-\beta}(-1){\bf 0}\rangle\\
%&&\ \ =-\langle \Psi | {\bf 0}\otimes X_{\alpha}(-1) {\bf0} \otimes (X_{-\alpha}\otimes \frac{1}{\xi_3}){\bf 0}\rangle\\
&&\ \ =\langle \Psi | X_{-\beta}H(-1) {\bf 0} \otimes X_{\beta}(-1) {\bf 0} \otimes {\bf 0}\rangle + \langle \Psi | H(-1) {\bf 0} \otimes X_{-\beta}(1)X_{\alpha}(-1) {\bf0} \otimes {\bf 0}\rangle\\
&& \ \ = \langle \Psi |(H(-1)X_{-\beta}+ [X_{-{\beta}}, H(-1)]){\bf 0}\otimes X_{\beta}(-1) {\bf 0} \otimes {\bf 0}\rangle \\
&&\ \ \hspace{2cm}+ \langle \Psi | H(-1) {\bf 0} \otimes X_{-\beta}(1)X_{\beta}(-1) {\bf0} \otimes {\bf 0}\rangle\\
&& \ \ =\langle \Psi |[X_{-{\beta}}, H(-1)]{\bf 0} \otimes X_{\beta}(-1) {\bf 0} \otimes {\bf 0}\rangle +\langle \Psi | H(-1) {\bf 0} \otimes X_{\beta}(-1) X_{-\beta}(1) {\bf0} \rangle\otimes {\bf 0}\rangle\\
&&\hspace{2cm} + \langle \Psi | H(-1) {\bf 0} \otimes [X_{-\beta}(1), X_{\beta}(-1)] {\bf0} \otimes {\bf 0}\rangle \\
&&\ \ = \langle \Psi |[X_{-{\beta}}, H](-1){\bf 0}\otimes X_{\beta}(-1) {\bf 0} \otimes {\bf 0}\rangle  + \langle \Psi | H(-1) {\bf 0} \otimes ([X_{-\beta}, X_{\beta}] + c\langle X_{-\beta}, X_{\beta}\rangle) {\bf0} \otimes {\bf 0}\rangle \\
&&\ \ = \langle \Psi |[X_{-{\beta}}, H](-1){\bf 0}\otimes X_{\beta}(-1) {\bf 0} \otimes {\bf 0} \rangle +\langle X_{-{\beta}}, X_{\beta}\rangle \langle \Psi | H(-1) {\bf 0 \otimes 0 \otimes 0 }\rangle\\
\end{eqnarray*} By the choice of $H$ and $X_{-\beta}$, it follows that $X_{-\beta}$ does not commute with $H$. 
It is easy to observe that the above expression up to a constant is equal to $\langle \Psi | X_{-\beta}(-1){\bf 0} \otimes X_{\beta}(-1){\bf 0} \otimes 0\rangle$. This by Case II is non zero. This completes the proof.
\section{Relations in the Picard group of $\mon$}\label{picardmon}Relations in $\operatorname{Pic}(\overline{\operatorname{M}}_{0,n})$ arising form rank-level duality has been studied in \cite{Muk3}. Using the geometric approach of \cite{Muk3}, we write down the relations in $\operatorname{Pic}(\mon)$ that arises from the strange duality considered in this paper. The main result of this section is the Proposition \ref{pic} from which Theorem \ref{picard} follows by taking the first Chern class. 

We now fix a few notations:
\begin{itemize}
\item Let $\mathcal{L}_i$ be the line bundle on $\mon$ whose fiber at a point $(X, P_1,\dots, P_n)$ is the cotangent space $T^*_{P_i}X$ to the curve $X$ at the point $P_i$. It's first Chern class is denoted by $\psi_i$. 
\item It is well known \cite{Fakh}, that the line bundle $\mathbb{V}_{\omega_0}(\mathfrak{e}_8,1)$ is the fourth tensor power of the Hodge bundle $\mathbb{H}$ on $\mon$ and we denote the Hodge class by $\lambda$. 
\item The divisor $\delta_{irr}$ denotes the class of the divisor corresponding to irreducible nodal curves. 
\item For any subset $A$ of $\{1,\dots, n\}$, the divisor $\delta_{h,A}$ corresponds to reducible nodal curves with one component having genus $h$ and containing the markings $A$. 
\end{itemize}
\begin{proposition}\label{pic} 
The line bundle $\det \mathbb{V}_{\vec{\omega}_1}(\mathfrak{g}_2,1)\otimes \bigotimes_{i=1}^n \mathbb{L}_i^{-F(g,n)}$ is isomorphic to the tensor product of following line bundles:
\begin{enumerate}
\item  $\det \mathbb{V}_{\vec{\omega}_4}(\mathfrak{f}_4,1)^{-1}\otimes \mathbb{H}^{\otimes 4F(g,n)}$, 
\item $\mathcal{O}_{\mon}(-F(g-1,n+2)\delta_{irr})$,
\item  $\mathcal{O}_{\mon}\big(-\sum_{h,A}F(h, |A|+1)F(g-h, n-|A|+1)\delta_{h,A}\big),$
\end{enumerate}
where $F(g,n)=(\frac{5+\sqrt{5}}{2})^{g-1}(\frac{1+\sqrt{5}}{2})^n+(\frac{5-\sqrt{5}}{2})^{g-1}(\frac{1-\sqrt{5}}{2})^n$, $\vec{\omega}_1$ is an $n$-tuple of $\omega_1$ and $\vec{\omega}_4$ is an $n$-tuple of $\omega_4$. Further $\delta_{h,A}=\delta_{g-h, A^c}$ to avoid repetition in the sum.

\end{proposition}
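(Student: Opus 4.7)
The plan is to realize the proposition as the determinant of a rank-level duality morphism extended to all of $\mon$, with the boundary corrections arising as the divisor of that determinant.

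\textbf{Construction on $\mon$.} Using the coordinate-free formalism of \cite{Fakh, Tsu, Muk3} together with the branching rule of Section~\ref{conformal} and the Virasoro compatibility supplied by the conformal embedding $\mathfrak{g}_2 \oplus \mathfrak{f}_4 \hookrightarrow \mathfrak{e}_8$, I would first construct a morphism of locally free sheaves
$$\widetilde{\Phi}\colon \mathbb{V}_{\vec{\omega}_1}(\mathfrak{g}_2,1) \otimes \bigotimes_{i=1}^{n} \mathbb{L}_i^{-1} \longrightarrow \mathbb{V}_{\vec{\omega}_4}(\mathfrak{f}_4,1)^{\vee} \otimes \mathbb{V}_{\vec{0}}(\mathfrak{e}_8,1)$$
on $\mon$. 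The $\mathbb{L}_i^{-1}$ twists at the marked points encode the identity $\Delta_{\omega_1}(\mathfrak{g}_2,1) + \Delta_{\omega_4}(\mathfrak{f}_4,1) = \tfrac{2}{5} + \tfrac{3}{5} = 1$ on the non-trivial branching summand, equivalently the fact that the highest weight vector of $\mathcal{H}_{\omega_1}(\mathfrak{g}_2,1) \otimes \mathcal{H}_{\omega_4}(\mathfrak{f}_4,1)$ sits at $L_0$-eigenvalue $1$ inside $\mathcal{H}_{\omega_0}(\mathfrak{e}_8,1)$ as $X_{\alpha}(-1)|\mathbf{0}\rangle$. By Theorem~\ref{main2}, $\widetilde{\Phi}$ is a fiberwise isomorphism of rank-$F(g,n)$ vector bundles over the smooth locus $M_{g,n}$.

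\textbf{Determinant section.} Writing $L := \det \mathbb{V}_{\vec{\omega}_1}(\mathfrak{g}_2,1) \otimes \bigotimes_{i} \mathbb{L}_i^{-F(g,n)}$ and $R := \det \mathbb{V}_{\vec{\omega}_4}(\mathfrak{f}_4,1)^{-1} \otimes \mathbb{H}^{\otimes 4F(g,n)}$ and recalling that $\mathbb{V}_{\vec{0}}(\mathfrak{e}_8,1) \simeq \mathbb{H}^{\otimes 4}$, the determinant of $\widetilde{\Phi}$ is a section
$$\sigma := \det \widetilde{\Phi} \in H^0(\mon, L^{-1} \otimes R)$$
that is nowhere vanishing on $M_{g,n}$. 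The proposition then follows once $\operatorname{div}(\sigma)$ is identified with the stated boundary combination.

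\textbf{Boundary vanishing via factorization.} To analyze $\sigma$ on $\delta_{irr}$, pick a transverse parameter $q$. The sewing procedure of \cite{TUY} provides a local splitting
$$\mathbb{V}_{\vec{\omega}_1}(\mathfrak{g}_2,1) \simeq \bigoplus_{\mu \in \{\omega_0,\omega_1\}} \mathbb{V}_{\vec{\omega}_1,\mu,\mu^{*}}(\mathfrak{g}_2,1) \otimes q^{\Delta_\mu(\mathfrak{g}_2,1)},$$
and analogous splittings for $\mathbb{V}_{\vec{\omega}_4}(\mathfrak{f}_4,1)$ (indexed by $\{\omega_0,\omega_4\}$) and $\mathbb{V}_{\vec{0}}(\mathfrak{e}_8,1)$ (singleton $\{\omega_0\}$). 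By the compatibility of rank-level duality with factorization \cite{BP, Muk1}, $\widetilde{\Phi}$ splits along the two matched branching pairs: the $(\omega_0,\omega_0)$ component is a rank-level duality for the normalization, hence an isomorphism by Theorem~\ref{main2} at genus $g-1$, and contributes no vanishing; the $(\omega_1,\omega_4)$ component acquires an overall factor of $q^{\Delta_{\omega_1}+\Delta_{\omega_4}}=q$ multiplied by an isomorphism of genus $g-1$ rank-level dualities with two extra markings of weight $\omega_1$ (resp.\ $\omega_4$). Passing to determinants, this block contributes order equal to the rank of its source, giving $\operatorname{ord}_{\delta_{irr}}(\sigma) = F(g-1,n+2)$. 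A parallel analysis on each reducible boundary divisor $\delta_{h,A}$, whose normalization splits into curves of genera $h$ and $g-h$ with $|A|+1$ and $n-|A|+1$ markings respectively, yields $\operatorname{ord}_{\delta_{h,A}}(\sigma) = F(h,|A|+1)\,F(g-h,n-|A|+1)$.

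\textbf{Main obstacle.} The hardest step is executing the coordinate-free construction of $\widetilde{\Phi}$ so that the $\mathbb{L}_i$ twists arising from the $L_0$-shift in the branching are exactly as stated, for which I would follow the coordinate-free setup of \cite{Muk3}, and tracking the $q^{\Delta_\mu}$ factors in the sewing formula carefully enough that, although the individual $\Delta_\mu$ are fractional, the integer sum $\Delta_{\omega_1}+\Delta_{\omega_4}=1$ on the matched non-vacuum branch produces an integer vanishing order. With these in hand, $\operatorname{div}(\sigma) = F(g-1,n+2)\delta_{irr}+\sum_{h,A} F(h,|A|+1)\,F(g-h,n-|A|+1)\,\delta_{h,A}$, and the isomorphism $L^{-1}\otimes R \simeq \mathcal{O}_{\mon}(\operatorname{div}\sigma)$ rearranges into the statement of Proposition~\ref{pic}.
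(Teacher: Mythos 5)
Your proposal is correct and follows essentially the same route as the paper: the coordinate-free rank-level duality map of \cite{Muk3} twisted by $\bigotimes_i \mathbb{L}_i^{-1}$ via the trace anomaly computation $\Delta_{\omega_1}(\mathfrak{g}_2,1)+\Delta_{\omega_4}(\mathfrak{f}_4,1)-\Delta_{\omega_0}(\mathfrak{e}_8,1)=1$, the identification $\mathbb{V}_{\vec{0}}(\mathfrak{e}_8,1)\simeq\mathbb{H}^{\otimes 4}$, Theorem~\ref{main2} for the generic isomorphism, and the sewing/factorization bookkeeping $F(g,n)=F(g-1,n+2)+F(g-1,n)$ to extract the boundary vanishing orders. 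Your explicit tracking of the $q^{\Delta_\mu}$ factors on each branching component is exactly the computation the paper delegates to the proof of the main theorem of \cite{Muk3}.
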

\begin{proof}By a straight forward calculation, we see that the trace anomalies $\Delta_{\omega_1}(\frg_2,1)=2/5$, $\Delta_{\omega_4}(\mathfrak{f}_4,1)=3/5$ and $\Delta_{\omega_0}(\mathfrak{e}_8,1)=0$. Hence the difference of trace anomalies 
$$n^{\omega_0}_{\omega_1,\omega_4}:=\Delta_{\omega_1}(\frg_2,1)+\Delta_{\omega_4}(\mathfrak{f}_4,1)-\Delta_{\omega_0}(\mathfrak{e}_8,1)=1$$
The coordinate free rank-level duality map in \cite{Muk3} and the above tells us that there is a map of the following vector bundles on $\mon$. 
$$\mathbb{V}_{\vec{\omega}_1}(\frg_2,1)\otimes \mathbb{V}_{\vec{\omega}_4}(\mathfrak{f}_4,1)\otimes \bigotimes_{i=1}^n \mathbb{L}_i^{-1} \rightarrow \mathbb{V}_{\vec{\omega}_0}(\mathfrak{e}_8,1).$$ We observe that the rank  of $\mathbb{V}_{\vec{\omega}_0}(\mathfrak{e}_8,1)$ is one. Taking determinants of the dual, we get a map of the following line bundles over $\mon$:
$$\det \mathbb{V}_{\vec{\omega}_1}(\frg_2,1) \otimes \bigotimes_{i=1}^n \mathbb{L}_i^{-\otimes F(g,n)}\rightarrow \det \mathbb{V}_{\vec{\omega}_4}(\mathfrak{f}_4,1)^{-1}\otimes\mathbb{V}_{\vec{\omega}_0}(\mathfrak{e}_8,1)^{\otimes F(g,n)}$$
Now rank-level duality implies that the above map is an isomorphism for every smooth curve of genus $g$. Now if we can compute the order of vanishing of the above map along $\delta_{irr}$ and $\delta_{h,A}$ we will done.

\subsubsection{Vanishing along $\delta_{irr}$} Now we observe that if $X_0$ is a irreducible nodal curve in $\delta_{irr}$ and $\tilde{X}_0$ is a normalization of $X_0$, then the factorization theorem in \cite{TUY} tells us 
$$\mathcal{V}_{\vec{\omega}_1}(X_0,\frg_2,1)\simeq \mathcal{V}_{\vec{\omega}_1,\omega_0,\omega_0}(\tilde{X}_0,\frg_2,1)\oplus \mathcal{V}_{\vec{\omega}_1,\omega_1,\omega_1}(\tilde{X}_0,\frg_2,1).$$ Similarly for $\mathfrak{f}_4$, we get 
$$\mathcal{V}_{\vec{\omega}_4}(X_0,\mathfrak{f}_4,1)\simeq \mathcal{V}_{\vec{\omega}_4,\omega_0,\omega_0}(\tilde{X}_0,\mathfrak{f}_4,1)\oplus \mathcal{V}_{\vec{\omega}_4,\omega_4,\omega_4}(\tilde{X}_0,\mathfrak{f}_4,1).$$ In both cases, the dimension equality works as $F(g,n)=F(g-1,n+2)+F(g-1,n)$. By the proof of main theorem \cite{Muk3} and using the above dimensions calculations, it is easy to see that the order of vanishing along $\delta_{irr}$ is $F(g-1,n+2)$. 

The proof of vanishing along $\delta_{h,A}$ is similar. We omit the details. 
\end{proof}
\newpage
\bibliographystyle{plain}
\def\noopsort#1{}
%\begin{thebibliography}{10}

\end{document}